\let\origsection=\section \def\section{\@ifstar{\origsection*}{\mysection}} 
\def\mysection{\@startsection{section}{1}\z@{.7\linespacing\@plus\linespacing}{.5\linespacing}{\normalfont\scshape\centering\S}}
\numberwithin{equation}{section}
\numberwithin{figure}{section}
\newtheorem{theorem}{Theorem}
\numberwithin{theorem}{section}
\newtheorem{lemma}[theorem]{Lemma}
\newtheorem{claim}[theorem]{Claim}
\theoremstyle{definition}
\newtheorem{remark}[theorem]{Remark}
\newtheorem{defn}[theorem]{Definition}
\newtheorem{prob}{Problem}
\newcommand{\script}{\mathcal}
\newcommand{\parentheses}[1]{{\left( {#1} \right)}}
\newcommand{\p}{\parentheses}
\newcommand{\Set}[1]{{\left\lbrace {#1} \right\rbrace}}
\newcommand{\cardinality}[1]{{\left\lvert {#1} \right\rvert}}
\def\set#1:#2{\Set{{#1} \colon {#2}}}
\def\downcl#1{\lceil{#1}\rceil}
\def\upcl#1{\lfloor{#1}\rfloor}
\newcommand{\N}{\mathbb{N}}
\newcommand{\sub}{\subseteq}
\renewcommand{\triangleleft}{\vartriangleleft}
\renewcommand{\leq}{\leqslant}
\renewcommand{\geq}{\geqslant}
\renewcommand{\preceq}{\preccurlyeq}
\renewcommand{\rho}{\varrho}
\renewcommand{\subset}{\subseteq}
\renewcommand{\supset}{\supseteq}
\newcommand{\nottriangleleft}{\not\kern-1pt\mathrel{\triangleleft}}
\DeclareMathOperator{\cf}{cf}
\DeclareMathOperator{\height}{height}
\newcommand{\GT}{{\dot{G}}}
\begin{document}
\title{Proof of Halin's normal spanning tree conjecture}

\author{Max Pitz}
\address{Universit\"at Hamburg, Department of Mathematics, Bundesstra\ss e 55 (Geomatikum), 20146 Hamburg, Germany}
\email{max.pitz@uni-hamburg.de}

\keywords{normal spanning trees, minor, colouring number, excluded minor characterisation}

\subjclass[2010]{05C83, 05C05, 05C63}  

\begin{abstract}
Halin conjectured 20 years ago that a graph has a normal spanning tree if and only if every minor of it has countable colouring number. We prove Halin's conjecture. This implies a forbidden minor characterisation for the property of having a normal spanning tree.
\end{abstract}

\maketitle

\section{Introduction}

\subsection{Halin's conjecture}

A rooted spanning tree $T$ of a graph $G$ is called \emph{normal} if the ends of any edge of $G$ are comparable in the natural tree order of $T$. Normal spanning trees are the infinite analogue of the depth-first search trees. All countable connected graphs have normal spanning trees, but uncountable graphs might not, as demonstrated by uncountable cliques. For background on normal spanning trees, see the textbook \cite[\S1.5 and \S8.2]{Bible} and Section~\ref{sec_background} below.

A graph $G$ has  \emph{countable colouring number} if there is a well-order $\leq^*$ on $V(G)$ such that every vertex of $G$ has only finitely many neighbours preceding it in $\leq^*$. Every graph with a normal spanning tree has countable colouring number: simply well-order level by level. The na\"ive converse fails, however, as witnessed by uncountable cliques with all edges subdivided.

In \cite{halin2000miscellaneous}, Halin observed 
that 
the property of having a normal spanning tree is minor-closed, i.e.\ preserved under taking connected minors. Recall that a graph $H$ is a \emph{minor} of another graph $G$, written $H \preceq G$, if to every vertex $x \in H$ we can assign a (possibly infinite) connected set $V_x \subset V(G)$, called the \emph{branch set} of $x$, so that these sets $V_x$ are disjoint for different $x$ and $G$ contains a $V_x-V_y$ edge whenever $xy$ is an edge of $H$. 

Hence, graphs with normal spanning trees have the property that 
also all their minors have countable colouring number. In \cite[Conjecture~7.6]{halin2000miscellaneous} Halin conjectured a converse to this observation. 
The purpose of this paper is  to prove Halin's conjecture.

\begin{restatable}{theorem}{main}
\label{thm_Halin's_conj}
A connected graph has a normal spanning tree if and only if every minor of it has countable colouring number.
\end{restatable}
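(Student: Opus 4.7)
The forward direction is precisely Halin's observation recorded above: if $T$ is an NST of $G$ and $H\preceq G$ is a minor with branch sets $\{V_x:x\in V(H)\}$, then contracting each $V_x$ to a single vertex inside $T$ turns $T$ into a normal spanning tree of $H$, and well-ordering $V(H)$ by height witnesses that each vertex has only finitely many earlier neighbours (the lower ancestors on its branch in the tree).

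For the converse I would argue the contrapositive: assuming $G$ is connected and has no NST, I want to exhibit a minor of $G$ with uncountable colouring number. The natural starting point is the classical Jung-type characterization, namely that $G$ has an NST iff $V(G)$ is a countable union of dispersed sets; equivalently, one greedily attempts to grow a normal tree by transfinite recursion, picking a root and, at each stage, attaching an uncovered vertex through a path that meets the current tree only at a top endpoint compatible with normality. If this procedure terminates at a countable ordinal, $G$ has an NST; the difficult case is when the process is forced to run for $\omega_1$ stages without ever covering $G$.

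My plan is to turn the failure of this construction into a rich minor. From an $\omega_1$-long failed attempt one extracts an uncountable family $\{V_\alpha:\alpha<\omega_1\}$ of pairwise disjoint connected subsets, each of which can be joined to arbitrarily many others through $G\setminus\bigcup_\beta V_\beta$, by virtue of $G$ being connected and each $V_\alpha$ lying above an unavoidable ``frontier''. Using a Fodor-style pressing-down argument along a club in $\omega_1$, I would refine the family (possibly several times, each time along a stationary subset) until the joining paths can be absorbed into the branch sets themselves, or at any rate into a bounded common ``spine''. The target is a minor on $\omega_1$ vertices in which every vertex is connected to every earlier one, i.e.\ an $\aleph_1$-clique minor; such a minor certainly has uncountable colouring number, as in any well-order the $\omega$-th vertex already has $\omega$ earlier neighbours.

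The main obstacle is the last step: turning an uncountable collection of pairwise linkable branch sets into an \emph{honest} $\aleph_1$-clique minor rather than merely a subdivided one (the latter has countable colouring number, as the example of the subdivided uncountable clique shows, and so would not suffice). Controlling the linking paths — making sure each one can be absorbed into one of the two branch sets it joins, without breaking disjointness of the other branch sets — is where the real work lies, and is where I would expect the proof to combine a careful tree-order book-keeping on the failed NST construction with an iterated stationary refinement argument on the index set $\omega_1$.
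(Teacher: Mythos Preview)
Your contrapositive strategy has a genuine gap: you aim to produce a $K^{\aleph_1}$ minor whenever $G$ has no normal spanning tree, and you rightly worry that a \emph{subdivided} $K^{\aleph_1}$ would not suffice. But an honest $K^{\aleph_1}$ minor need not exist either. Consider any connected $(\aleph_0,\aleph_1)$-graph: a bipartite graph $(A,B)$ with $|A|=\aleph_0$, $|B|=\aleph_1$, and every vertex of $B$ of infinite degree (for instance $B=[A]^{\omega}$ with $b\in B$ adjacent to each $a\in b$). This graph has uncountable colouring number and hence no normal spanning tree. Yet it has no $K^{\aleph_1}$ minor: among any $\aleph_1$ disjoint connected branch sets, all but countably many must miss the countable side $A$ and are therefore singletons in the independent set $B$, with no edges between them. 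So the target of your Fodor-type construction is simply not there, and no refinement can produce it. (More generally, the obstructions turn out to be the $(\lambda,\lambda^+)$-graphs and $(\kappa,S)$-graphs of Theorem~\ref{thm_forbiddenminorsIntro}, not uncountable cliques.) Your framing of the failure as an ``$\omega_1$-long attempt'' is also too narrow: for graphs of size ${>}\aleph_1$ the obstructions can live at arbitrarily large regular cardinals.

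The paper does not argue by contrapositive at all. It proves the implication directly by induction on $\kappa=|G|$, and the key idea is to exploit the countable colouring number of one carefully chosen minor: the $T$-graph $\dot{G}=G/T$ arising from a Brochet--Diestel normal partition tree $T$ for $G$. A well-order on $\dot{G}$ witnessing its countable colouring number, combined with a closure lemma that rules out barricade and Aronszajn-tree minors, yields a decomposition of $G$ into a continuous chain of ${<}\kappa$-sized induced subgraphs of \emph{finite adhesion} (Lemma~\ref{lem_decomposition2}). Each of these has a normal spanning tree by induction, and Jung's Theorem~\ref{thm_Jung} is then used to glue them into a normal spanning tree for~$G$.
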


\subsection{A forbidden minor characterisation for normal spanning trees} In the same paper~\cite{halin2000miscellaneous}, 
Halin asked for an explicit forbidden minor characterisation for the property of having a normal spanning tree \cite[Problem~7.3]{halin2000miscellaneous}. Using the recent forbidden subgraph characterisation for the property of having countable colouring number by Bowler, Carmesin, Komj\'ath and Reiher \cite{bowler2015colouring}, such a forbidden minor characterisation 
can be deduced from Theorem~\ref{thm_Halin's_conj}. 

These forbidden minors come in two structural types: First, the class of $(\lambda,\lambda^+)$\emph{-graphs}, bipartite graphs $(A,B)$ such that $\cardinality{A}=\lambda$, $\cardinality{B}=\lambda^+$ for some infinite cardinal $\lambda$, and every vertex in $B$ has infinite degree. And second, the class of $(\kappa,S)$\emph{-graphs}, graphs whose vertex set is a regular uncountable cardinal $\kappa$ such that stationary many vertices $s \in S \subseteq \kappa$ have countably many neighbours that are cofinal below~$s$.

\begin{restatable}{theorem}{maintwo}
\label{thm_forbiddenminorsIntro}
	A connected graph $G$ has a normal spanning tree if and only if it contains neither a $(\lambda,\lambda^+)$-graph nor a  $(\kappa,S)$-graph as a minor with countable branch sets.
\end{restatable}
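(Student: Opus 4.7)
I would deduce Theorem~\ref{thm_forbiddenminorsIntro} by combining Theorem~\ref{thm_Halin's_conj} with the Bowler--Carmesin--Komj\'ath--Reiher forbidden-subgraph characterisation of countable colouring number, according to which a graph has uncountable colouring number if and only if it contains a $(\lambda,\lambda^+)$-graph or a $(\kappa,S)$-graph as a subgraph.

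For the easy direction I would not need the countable-branch-set hypothesis at all: assuming $G$ has a normal spanning tree, every minor $H'$ of $G$ inherits one, since having an NST is preserved under taking (connected) minors as noted in the introduction. Therefore $H'$ has countable colouring number, and the BCKR theorem precludes $H'$ from being either a $(\lambda,\lambda^+)$-graph or a $(\kappa,S)$-graph.

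For the converse, I would start by assuming $G$ has no NST and applying Theorem~\ref{thm_Halin's_conj} to get a minor $H\preceq G$ of uncountable colouring number, realised by branch sets $(V_x)_{x\in V(H)}$. The BCKR theorem then yields a subgraph $H'\subseteq H$ of one of the two forbidden types, and restricting to the branch sets indexed by $V(H')$ already exhibits $H'$ as a minor of $G$. The remaining task, which is the real content of Theorem~\ref{thm_forbiddenminorsIntro}, is to replace each $V_x$ by a countable connected subset $V_x'\subseteq V_x$ so that for every $xy\in E(H')$ some $V_x'$--$V_y'$ edge of $G$ survives.

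For low-degree vertices this trimming is routine: I would first replace each $b\in B$ by a countably infinite subset of its $A$-neighbours in the $(\lambda,\lambda^+)$-case, and each $s\in S$ by a countable cofinal-below subset of its neighbours in the $(\kappa,S)$-case. Every such vertex then has only countably many required contact points in its branch set, and since that branch set is connected these can be enclosed in a countable connected subset by successively adding the finite paths joining them inside $V_x$. The hard part is handling vertices of uncountable $H'$-degree -- the $A$-side of the $(\lambda,\lambda^+)$-graph, or certain vertices outside $S$ in the $(\kappa,S)$-case -- since a countable $V_x'$ can meet at most countably many other $V_y'$. My plan would be to overcome this by further pruning $H'$ so as to retain only countably many $H'$-edges at each such vertex, while still preserving a forbidden subgraph of the same structural type, possibly with smaller cardinal parameters (for instance an $(\aleph_0,\aleph_1)$-subgraph inside the original $(\lambda,\lambda^+)$-graph, obtained by passing to a countable subset of $A$ and a suitably chosen cofinal family of $B$-neighbourhoods on it). Verifying that such a pruning is always available and terminates with a genuine forbidden subgraph of the required type is the step I would expect to be the most technical.
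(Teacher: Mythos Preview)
Your easy direction is fine, and you correctly identify that the nontrivial content lies in the ``countable branch sets'' clause. However, the pruning strategy you outline does not go through as stated, and the paper takes a different route entirely.

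The difficulty is precisely the one you flag but do not resolve. Even after passing to an $(\aleph_0,\aleph_1)$-subgraph, every vertex $a$ on the $A$-side still has $\aleph_1$ neighbours in $B$. If the $V_a$--$V_b$ edges for the various $b\in N(a)$ happen to land on $\aleph_1$ distinct vertices of $V_a$, then any countable connected $V_a'\subseteq V_a$ retains edges to only countably many $V_b$'s; doing this for each of the countably many $a\in A$ leaves only countably many $b$'s with infinite degree, destroying the $(\aleph_0,\aleph_1)$-structure. There is no smaller forbidden type to fall back on: every $(\lambda,\lambda^+)$-graph has vertices of degree $\lambda^+$, and in a $(\kappa,S)$-graph a vertex below $S$ may lie in the down-neighbourhood of a stationary set. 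So the implication ``$G$ has a forbidden minor $\Rightarrow$ $G$ has one with countable branch sets'', while true a posteriori, does not follow by any direct trimming of a given witness $H'\preceq G$.

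The paper avoids this entirely. Rather than starting from an arbitrary bad minor and trying to shrink its branch sets, it re-examines the proof of Theorem~\ref{thm_Halin's_conj} and checks that every minor of $G$ whose countable colouring number is actually \emph{used} already has, or can be taken to have, countable branch sets. Concretely: the normal-partition-tree minor $\GT\preceq G$ has countable branch sets by Lemma~\ref{lem_noOmega_1chains}; the barricade in Lemma~\ref{lem_closureTrobust} can be realised by contracting countable subtrees of the components rather than the full components; the Aronszajn-tree minor arises from contracting a single countable set in $\GT$; the well-order on $\GT$ in the regular case only requires BCKR applied to $\GT$ itself; and the dominated torsos $\hat G_i$ arise from finite branch sets. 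Thus the contrapositive of Theorem~\ref{thm_Halin's_conj} already delivers a forbidden \emph{subgraph} inside some minor of $G$ with countable branch sets, and composing the two minor maps gives what you want. The moral is that the ``countable branch sets'' refinement is obtained by proof-mining Theorem~\ref{thm_Halin's_conj}, not by post-processing its output.
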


A surprising consequence of Theorem~\ref{thm_forbiddenminorsIntro} is that a graph of singular uncountable cardinality $\kappa$ has a normal spanning tree as soon as all its minors of size strictly less than $\kappa$ admit normal spanning trees. This is not the case when $\kappa$ is regular \cite[Theorem~5.1]{pitz2020new}.

That it suffices to forbid minors with countable branch sets in Theorem~\ref{thm_forbiddenminorsIntro} has an immediate application: From it, we deduce a proof of Diestel's normal spanning tree criterion from \cite{diestel2016simple}, that a graph has a normal spanning tree provided it contains no subdivision of a `fat' $K^{\aleph_0}$, a complete graph in which every edge has been replaced by uncountably many parallel edges.  

\subsection{Relation to work by Diestel and Leader}

Halin's conjecture would have followed from Diestel and Leader's proposed forbidden minor characterisation of graphs having a normal spanning tree  \cite{DiestelLeaderNST}. Unfortunately, their result is not correct, as shown by the author in 
 \cite{pitz2020new}. 

Diestel and Leader \cite{DiestelLeaderNST} asserted that  the forbidden minors for the property of having a normal spanning tree are $(\aleph_0,\aleph_1)$-graphs, and \emph{Aronszajn tree-graphs}, graphs whose vertex set is an 
Aronszajn tree $\script{T}$ 
such that the down-neighbourhood of any node $t \in \script{T}$ is cofinal below~$t$.

Well-ordering an Aronszajn tree-graph level by level shows that it contains an $(\omega_1,S)${-}subgraph.
However, \cite[Theorems~3.1 and 5.1]{pitz2020new} show that there exist $(\omega_1,S)$-graphs and also larger $(\kappa,S)$-graphs that contain neither an $(\aleph_0,\aleph_1)$-graph nor an Aronszajn tree-graph as a minor. 


\subsection{Organisation of this paper}
In Section~\ref{sec_background}, we recall all facts about normal trees, $T$-graphs for order-theoretic trees $(T,\leq)$, as well as stationary sets that are needed in this paper. 

Section~\ref{sec_decomp} contains a decomposition result from which we prove Theorem~\ref{thm_Halin's_conj} by a cardinal induction in Section~\ref{sec_mainproof}. When tackling Halin's conjecture, one naturally faces the question how to best exploit the assumption that no minor of $G$ has countable colouring number. Besides extending a number of ideas from \cite{bowler2015colouring} to the minor setting, the key ingredient is to take advantage of 
the countable colouring number of 
one particular minor $\GT \preceq G$ 
that has the structure of a $T$-graph, provided by the theory of normal tree orders 
of Brochet and Diestel \cite{brochet1994normal}. 

Section~\ref{sec_forb} contains the proof of our forbidden minor characterisation as stated in Theorem~\ref{thm_forbiddenminorsIntro}.
Finally, in Section~\ref{sec_fat} we use this forbidden minor characterisation to give a corrected proof of Diestel's normal spanning tree criterion.

\subsection{Acknowledgements}

I would like to thank Reinhard Diestel for stimulating discussions and an insight simplifying the
construction of the normal spanning trees in Theorem~\ref{thm_Halin's_conj}.

\section{Preliminaries}
\label{sec_background}

We follow the notation in \cite{Bible}. Given a subgraph $H \subseteq G$, write $N(H)$ for the set of vertices in $G -H$ with a neighbour in $H$. A \emph{tour} is a walk that may repeat vertices but not edges.

\subsection{Normal spanning trees}

If $T$ is a graph-theoretic tree with root~$r$, we write $x \le y$
for vertices $x,y\in T$ if $x$ lies on the unique $r$--$y$ path in~$T$.
A rooted tree $T \subset  G$ \emph{contains a set $U$ cofinally} if $U \subset V(T)$ and $U$ is cofinal in the tree-order $(T,\leq)$.

A rooted tree $T \subset G$ is \emph{normal (in $G$)} if the end vertices of any $T$-path in $G$ (a path in $G$ with end vertices in $T$ but all edges and inner vertices outside of $T$) are comparable in the tree order of $T$. See also \cite[\S1.5 and \S8.2]{Bible}.
If $T$ is spanning, this reduces to the requirement that the ends of any edge of $G$ are comparable in the tree order on $T$. For a normal tree $T\subset G$, the neighbourhood $N(D)$ of every component $D$ of $G-T$ forms a chain in $T$. 

A set of vertices $U \subset V(G)$ is \emph{dispersed} (in G) if every ray in $G$ can be separated from $U$ by a finite set of vertices. 
The following theorem of Jung \cite[Satz~6]{jung1969wurzelbaume} characterises vertex sets $U \subset V(G)$ that are cofinally contained in a normal tree of $G$. See also \cite{pitz2020} or \cite[Theorem~3.5]{jancarl}.

\begin{theorem}[Jung]
\label{thm_Jung}
Let $G$ be a connected graph and $r$ any vertex of $G$. A set of vertices $U \subset V(G)$ is cofinally contained in some normal tree of $G$ with root $r$ if and only if $U$ is a countable union of dispersed sets in $G$.
\end{theorem}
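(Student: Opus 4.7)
I would split this into two directions.

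For \textbf{necessity}, given $U$ cofinally contained in a normal tree $T \subset G$ with root $r$, I would use that $T$, as a graph-theoretic tree, splits into countably many levels $T_n$, so $U$ is the countable union of its traces $U \cap T_n$ on the levels. It then suffices to check that each level $T_n$ is dispersed in $G$. For this, given a ray $R$ in $G$, I would locate a tail of $R$ inside a single component $D$ of $G - T$ (the case $R \subseteq T$ is treated directly by analysing the ancestors in $T$ of the tree vertices of $R$). Since normality forces $N(D)$ to be a chain in $T$, it meets $T_{\le n}$ in at most $n+1$ vertices, and removing these together with the finitely many $T$-vertices of $R$ at levels $\le n$ yields a finite separator between a tail of $R$ and $T_n$ in $G$.

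For \textbf{sufficiency}, given $U = \bigcup_{n<\omega} U_n$ with each $U_n$ dispersed, I would build the normal tree as the union of an increasing chain of finite-height rooted normal trees $T^{(0)} \subset T^{(1)} \subset \cdots$ in $G$, recursively extending $T^{(n-1)}$ to absorb the next targeted vertex $u$ of $U$. The dispersedness of $U_n$ is used to produce a finite separator cutting $u$ off from the ``far'' frontier components of $G - T^{(n-1)}$, and the extension is carried out with bounded depth by attaching new edges along the existing chains $N(D)$ so that normality is preserved. Cofinality of $U$ in the resulting tree is then arranged by a bookkeeping enumeration that forces every enumerated $u \in U$ to have a strictly higher successor added at a later stage.

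The \textbf{main obstacle}, I expect, lies in the sufficiency direction: the stage-wise extensions must simultaneously preserve normality, guarantee that every $u \in U$ is eventually captured \emph{and} acquires a strictly higher descendant in $T$ (so as to be cofinal rather than maximal), and keep the unfinished parts of $G$ in a state where the induction hypothesis continues to apply to them. The key technical lemma I would isolate and iterate is a \emph{finite separation} statement: every dispersed set can be separated from the ``far'' part of $G$ by a finite cut chosen to be compatible with the chains $N(D)$ of the growing tree. Weaving this lemma together with the enumeration of $U$ is the delicate heart of the construction.
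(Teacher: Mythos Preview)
The paper does not prove this theorem; it is stated as a classical result of Jung, with references to \cite{jung1969wurzelbaume}, \cite{pitz2020} and \cite{jancarl}, and used as a black box throughout Section~\ref{sec_mainproof}. So there is no in-paper proof to compare your proposal against.

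On the merits of your sketch: the necessity direction is essentially right, but your case split is incomplete. You treat only (i) $R\subseteq T$ and (ii) a tail of $R$ lying in a single component of $G-T$; a ray may, however, weave in and out of $T$ infinitely often. The standard fix is to note that consecutive $T$-vertices along $R$ are endpoints of a $T$-path and hence comparable by normality, which forces \emph{all} $T$-vertices of $R$ onto a single chain of $T$; as $R$ is injective this chain meets each level at most once, and your finite-separator argument then goes through.

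For sufficiency your architecture (an $\omega$-chain of finite-height normal trees absorbing the $U_n$ in turn) matches the classical proof, but the sentence ``dispersedness of $U_n$ is used to produce a finite separator cutting $u$ off from the far frontier components'' hides the real work. Dispersedness yields finite separators from \emph{rays}, not from arbitrary infinite pieces of $G$; what is actually needed at each step is that $U_n\cap D$, for each component $D$ of $G-T^{(n-1)}$, can be absorbed by a finite-height normal extension into $D$. Passing from ``no ray stays near $U_n$'' to such a finite structural statement requires a compactness-type argument, and this is precisely where Jung's proof (and the later versions cited in the paper) invests its effort. Your plan would be sharper if you isolated this lemma explicitly before iterating.
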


\subsection{Normal tree orders and $T$-graphs}

A partially ordered set $(T,\le)$ is called an \emph{order tree} if it has a unique minimal element (called the \emph{root}) and all subsets of the form $\lceil t \rceil = \lceil t \rceil_T := \set{t' \in T}:{t'\le t}$
are well-ordered. Our earlier partial ordering on the vertex set of
a rooted graph-theoretic tree is an order tree in this sense.

Let $T$ be an order tree. A~maximal chain in~$T$ is called a \emph{branch}
of~$T$; note that every branch inherits a well-ordering from~$T$. The
\emph{height} of~$T$ is the supremum of the order types of its branches. The
\emph{height} of a point $t\in T$ is the order type of~$\mathring{\lceil t \rceil}  :=
\lceil t \rceil  \setminus \{t\}$. The set $T^i$ of all points at height $i$ is
the $i$th \emph{level} of~$T$, and we
write $T^{<i} := \bigcup\set{T^j}:{j < i}$.

The intuitive interpretation of a tree order as expressing height will also
be used informally. For example, we say that $t$ is \emph{above}~$t'$
if $t > t'$, and call $\lceil X \rceil = \lceil X \rceil _T := \bigcup \set{\lceil x \rceil}:{x\in
X}$ the \emph{down-closure} of~$X\sub T$. And we say that $X$ is \emph{down-closed}, or $X$ is a \emph{rooted subtree}, if $X=\lceil X \rceil $.

If $t < t'$, we write $[t,t'] = \set{x}:{t \leq x \leq t'}$, and call this
set a (closed) \emph{interval} in~$T$. (Open and half-open intervals in~$T$
are defined analogously.) A~subset of $T$ that is an order tree under the
ordering induced by~$T$ is a \emph{subtree} of $T$ if along with any two
comparable points it contains the interval in~$T$ between them. If $t < t'$
but there is no point between $t$ and~$t'$, we call $t'$ a \emph{successor}
of~$t$ and $t$ the \emph{predecessor} of~$t'$; if $t$ is not a successor
of any point it is called a \emph{limit}.

An order tree $T$ is \emph{normal} in a graph $G$, if $V(G) = T$
and the two ends of any edge of $G$ are comparable in~$T$. We call $G$ a
\emph{$T$-graph} if $T$ is normal in $G$ and the set of lower neighbours of
any point $t$ is cofinal in $\mathring{\lceil t \rceil}$. 
An $\omega_1$-graph is a $T$-graph for the well-order $T = (\omega_1,\leq)$.
For detailed information on normal tree orders, we refer the reader to \cite{brochet1994normal}.
If $G$ is a $T$-graph,
then every interval $[t,t']$ in~$T$ (and hence every subtree of~$T$)
is connected in~$G$, because only $t$ can be a minimal element of any of
its components. For later use we note down the following standard results about $T$-graphs, and refer the reader to \cite[\S2]{brochet1994normal} for details.

\begin{lemma}
\label{lem_Tgraphproperties}
Let $(T,\leq)$ be an order tree and $G$ a $T$-graph.
\begin{enumerate}
\item \label{itemT1} For incomparable vertices $t,t'$ in $T$, the set $\downcl{t} \cap \downcl{t'}$ separates $t$ from $t'$ in $G$.
\item \label{itemT2}Every connected subgraph of $G$ has a unique $T$-minimal element.
\item \label{itemT3} If $T' \subset T$ is down-closed, the components of $G - T'$ are spanned by the sets $\upcl{t}$ for $t$ minimal in $T-T'$.
\end{enumerate}
\end{lemma}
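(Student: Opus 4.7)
I would prove the three statements in the order (2), (1), (3), since (1) falls out of (2) and (3) builds on the same picture.

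The heart of the argument is (2), the existence and uniqueness of a $T$-minimal vertex in any connected subgraph $C \subseteq G$. Since every $\downcl{v}$ is well-ordered, the order $T$ is well-founded, so $C$ has at least one $T$-minimal element; what needs proof is that $C$ has \emph{only} one, or equivalently that all vertices of $C$ lie above some common $s$. The plan is to pick a $T$-minimal $s \in C$ and show, for any $v \in C$ and any $v$--$s$ path $P = v_0 v_1 \cdots v_n$ in $C$, that every $v_i$ satisfies $v_i \geq s$, by reverse induction on $i$. The key observation is that in a $T$-graph consecutive vertices of $P$ are comparable, so at the inductive step either $v_{i-1} \geq v_i \geq s$ directly, or $v_{i-1} < v_i$; in the second case $v_{i-1}$ and $s$ both belong to the well-ordered chain $\downcl{v_i}$, hence are comparable, and $v_{i-1} < s$ would violate the $T$-minimality of $s$ on $P$. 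This gives $v \geq s$ for all $v \in C$.

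Statement (1) is an immediate corollary: given incomparable $t,t'$, apply (2) to any $t$--$t'$ path $P$. The $T$-minimum $s$ of $V(P)$ lies at or below both endpoints, so $s \in \downcl{t} \cap \downcl{t'}$, and $s \neq t,t'$ because $t,t'$ are incomparable. Hence every such path meets $\downcl{t} \cap \downcl{t'}$, as claimed.

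For (3), fix a down-closed $T' \subset T$. Each $t$ minimal in $T - T'$ has $\mathring{\downcl{t}} \subseteq T'$, and I would show three things about the family $\{\upcl{t} : t \text{ minimal in } T - T'\}$: each $\upcl{t}$ is a subtree of $T$ and so, by the connectedness-of-intervals remark already in the text (a consequence of (2)), induces a connected subgraph of $G - T'$; different $\upcl{t}, \upcl{t'}$ are disjoint, since a common element $s$ would force $t,t' \in \downcl{s}$ and hence their comparability, contradicting minimality; and no edge of $G$ runs between different $\upcl{t}$, since an edge $ss'$ with $s \in \upcl{t}$, $s' \in \upcl{t'}$ would, by comparability of edge ends, place $t$ and $t'$ together in the well-ordered chain $\downcl{\max(s,s')}$, again contradicting minimality. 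Finally, every $v \in V(G) - T'$ belongs to some such $\upcl{t}$: the set $\downcl{v} \cap (T - T')$ is non-empty and well-ordered, and its minimum is a minimal element of $T - T'$ below~$v$.

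The main obstacle is the induction in (2); once that is in place, (1) is essentially a restatement and (3) is a bookkeeping argument using minimality of the chosen $t$ together with the well-ordering of down-sets.
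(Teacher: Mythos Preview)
Your proof is correct and follows the standard route. The paper itself does not supply a proof of this lemma: it records the three statements as ``standard results about $T$-graphs'' and refers the reader to \cite[\S2]{brochet1994normal}. The only argumentative hint the paper gives is the sentence just before the lemma, that intervals (and hence subtrees) of $T$ are connected in $G$ ``because only $t$ can be a minimal element of any of its components''---which is precisely the idea behind your proof of (2), and which you correctly invoke for the connectedness part of (3). So your write-up is essentially the argument the paper is gesturing at, fleshed out in the natural order $(2)\Rightarrow(1)\Rightarrow(3)$.
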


\subsection{Stationary sets and Fodor's lemma}
We denote ordinals by $i,j,k,\ell$, and identify $i = \set{j}:{j < i}$.
Let $\ell$ be any limit ordinal. A subset $A \subset \ell$ is \emph{unbounded} if $\sup A = \ell$, and \emph{closed} if $\sup (A \cap m) = m$ implies $m \in A$ for all limits $m < \ell$. The set $A$ is a \emph{club-set} in $\ell$ if it is both closed and unbounded.
A subset $S \subset \ell$ is \emph{stationary} (in $\ell$) if $S$ meets every club-set of $\ell$. For the following standard results about stationary sets see e.g.\ \cite[\S III.6]{Kunen}.

\begin{lemma}
\label{lem_stationary}
(1) If $\kappa$ is a regular uncountable cardinal, $S \subset \kappa$ is stationary and $S = \bigcup \set{S_n}:{i \in \N}$, then some $S_n$ is stationary.

(2) \emph{[Fodor's lemma]} If $\kappa$ is a regular uncountable cardinal, $S \subset \kappa$ stationary and $f \colon S \to \kappa$ is  such that $f(s)<s$ for all $s \in S$, then there is $i< \kappa$ such that $f^{-1}(i)$ is stationary. 
\end{lemma}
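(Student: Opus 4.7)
My plan is to prove both parts by reducing them to the key closure property that the intersection of fewer than $\cf(\kappa)$ many club-sets in $\kappa$ is again a club-set, and, for the second part, the diagonal intersection of $\kappa$ many club-sets is a club.

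For (1), I would argue by contradiction: assume no $S_n$ is stationary, so for each $n \in \N$ pick a club-set $C_n \subset \kappa$ with $S_n \cap C_n = \emptyset$. The claim is then that $C = \bigcap_n C_n$ is still a club-set, which contradicts the assumption that $S = \bigcup_n S_n$ is stationary. Closure of $C$ under suprema is immediate. For unboundedness, given $\alpha < \kappa$, I would construct by recursion an increasing sequence $\alpha < \beta_0 < \beta_1 < \cdots$ in $\kappa$ such that $\beta_k \in C_{k \bmod \N}$ (cycling through the clubs), using unboundedness of each $C_n$ at each step. Since $\kappa$ is regular uncountable, $\beta := \sup_k \beta_k < \kappa$, and $\beta \in C_n$ for every $n$ by closure, hence $\beta \in C$.

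For (2), I would again argue by contradiction. Suppose the conclusion fails; then for every $i < \kappa$ the set $f^{-1}(i)$ is non-stationary, so there exists a club-set $C_i$ disjoint from $f^{-1}(i)$. Form the diagonal intersection
\[
C \;=\; \set{\alpha < \kappa}:{\alpha \in C_i \text{ for all } i < \alpha}.
\]
The standard argument, analogous to (1) but using regularity of $\kappa$ to bound the construction, shows that $C$ is a club-set in $\kappa$: closure follows by checking that a limit $\alpha$ of points in $C$ lies in each $C_i$ with $i < \alpha$, and unboundedness follows by iterating a bookkeeping construction that accommodates all the $C_i$ with $i$ below the current stage, again using that $\cf(\kappa) > \omega$ (in fact $\cf(\kappa) = \kappa$). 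Now $S \cap C$ is non-empty because $S$ is stationary, so pick $s \in S \cap C$. Setting $j := f(s) < s$, membership of $s$ in $C$ forces $s \in C_j$; but $s \in f^{-1}(j)$, contradicting $C_j \cap f^{-1}(j) = \emptyset$.

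The only real obstacle is justifying that the diagonal intersection is a club-set in the second part; everything else is bookkeeping. Since the paper explicitly cites \cite[\S III.6]{Kunen} for these two results, I would simply refer the reader there rather than reproducing the standard argument in full, which is presumably what the author intends.
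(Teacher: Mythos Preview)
Your proposal is correct: both arguments are the standard ones and are carried out properly. The paper itself gives no proof of this lemma at all; it simply refers the reader to \cite[\S III.6]{Kunen}, exactly as you anticipated in your final paragraph. So your write-up in fact supplies more detail than the paper does, and there is nothing further to compare.
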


\section{Decomposing graphs into subgraphs of finite adhesion}
\label{sec_decomp}

\subsection{Statement of the decomposition result} The aim of this section is to prove a decomposition lemma which allows us to prove Halin's conjecture by cardinal induction. To state the decomposition lemma, we need a definition.

\begin{defn}[Subgraphs of finite adhesion]
\label{def_Trobust}
Given a subgraph $H \subset G$, a set of vertices $A \subset V(H)$ of the form $A=N(D)$ for some component $D$ of $G - H$ is an \emph{adhesion set}.
A subgraph $H \subset G$ has \emph{finite adhesion} if all adhesion sets in $H$ are finite.
\end{defn}

\begin{remark}
\label{rem_Trobust}
An increasing $\omega$-union of subgraphs of finite adhesion may fail to have finite adhesion. An increasing $\omega_1$-union of subgraphs of finite adhesion has itself finite adhesion.
\end{remark}

Our main decomposition result reads as follows.

\begin{lemma}[Decomposition lemma]
\label{lem_decomposition2}
Let $G$ be a connected graph of uncountable size $\kappa$ with the property that all its minors have countable colouring number. Then $G$ can be written as a continuous increasing union $ \bigcup_{i < \sigma} G_i$ of infinite, ${<}\kappa$-sized connected induced subgraphs $G_i$ of finite adhesion in $G$.
\end{lemma}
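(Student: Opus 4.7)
The plan is to leverage the Brochet--Diestel theory of normal tree orders to extract a distinguished $T$-graph minor $\GT\preceq G$ with countable branch sets, and then to exploit the countable colouring number of $\GT$ (which it inherits by hypothesis, as a minor of $G$) via the two natural well-orderings on its vertex set: the tree order $\leq$ on $T$ and a colouring-number well-order $\leq^*$.

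First, I would invoke Brochet--Diestel to fix a normal partition tree $(V_t)_{t\in T}$ of $G$ into countable connected branch sets, so that the contraction $\GT$ is a $T$-graph. Since $|G|=\kappa$ is uncountable and branch sets are countable, $|T|=\kappa$. As $\GT$ is a minor of $G$, it has countable colouring number; let $\leq^*$ be a witnessing well-order on $T$. I would then build a continuous increasing chain $(T_i)_{i<\sigma}$ of rooted subtrees of $T$ (i.e.\ down-closed in $\leq$) of size ${<}\kappa$ with $\bigcup_i T_i=T$, where $\sigma$ is chosen according to the cofinality of $\kappa$. Each $T_i$ is produced by a transfinite recursion that at every step closes under both the tree order $\leq$ (to keep $T_i$ a rooted subtree, making $\GT[T_i]$ connected by the standard $T$-graph property, which lifts to connectedness of $G_i := G[\bigcup_{t\in T_i} V_t]$ through the connectedness of the branch sets) and under $\leq^*$-predecessors of a suitable boundary set (to tame the adhesion).

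To verify finite adhesion of each $G_i$, I would apply Lemma \ref{lem_Tgraphproperties}(3): the components of $\GT-T_i$ are the upward closures $\lceil t\rceil$ for $t$ minimal in $T\setminus T_i$, and the adhesion of the corresponding component in $\GT$ is contained in $\mathring{\lceil t\rceil}$. The $\leq^*$-closure conditions imposed on $T_i$, together with the finite $\leq^*$-predecessor-neighbour property, are designed exactly to force this set to be a finite subset of $\mathring{\lceil t\rceil}$; countable branch sets and the finitely many cross-edges between adjacent branch sets then lift finite adhesion from $\GT$ up to $G$.

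The main obstacle is bridging the gap between ``finitely many $\leq^*$-predecessor neighbours \emph{per vertex}'' (which is what $\leq^*$ directly supplies) and ``finite adhesion for the \emph{whole} component'' (which aggregates cross-edges from every vertex in $\lceil t\rceil$). If $t$ is a limit in $T$, then already the down-neighbours of $t$ alone form a cofinal, and hence infinite, subset of $\mathring{\lceil t\rceil}$, so $T_i$ must additionally be closed so that every minimal element of $T\setminus T_i$ is a successor of $T$. Controlling the further contributions from the vertices $s>t$ in the component will require a Fodor-style stationary-set argument, extending ideas of Bowler--Carmesin--Komjath--Reiher from the subgraph setting to the minor setting as the paper indicates. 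A final subtlety, highlighted by Remark \ref{rem_Trobust}, is the case $\cf(\kappa)=\omega$, where $\omega$-unions of subgraphs of finite adhesion need not themselves have finite adhesion; here one likely has to impose a strengthened closure at each step so that the $\omega$-limit still has finite adhesion.
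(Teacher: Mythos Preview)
Your high-level plan matches the paper's: fix a Brochet--Diestel normal partition tree $T$ with countable branch sets, work with the $T$-graph minor $\GT$, and build a continuous chain of down-closed $T_i$ using a colouring-number well-order $\leq^*$ on $\GT$. But the crucial technical step---how to actually secure finite adhesion---does not work as you describe.

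Your proposed fix for the limit problem, namely closing $T_i$ so that every minimal element of $T\setminus T_i$ is a successor, fails on cardinality grounds: there is no reason why $T$ should have fewer than $\kappa$ limit nodes, so this closure can blow $|T_i|$ up to $\kappa$. The paper does the opposite. It does \emph{not} try to prevent limits from being minimal above $T_i$. Instead, it first proves a separate Closure Lemma (Lemma~\ref{lem_closureTrobust}): any infinite $X\subset T$ sits inside a rooted subtree $T'$ of the same size with $G(T')$ of finite adhesion, obtained by an $\omega_1$-long recursion that repeatedly adjoins the $T$-minimal node $t_D$ of each bad component $D$. Size control in this recursion comes not from Fodor but from two forbidden minors: a barricade (to keep successor steps small) and an Aronszajn tree-graph (to keep the whole $\omega_1$-union countable when $X$ is countable). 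In the regular case the paper then interleaves this Closure Lemma with closure to an initial segment of $\leq^*$, and argues as follows: if some component $D$ of $G-G(T_{i+1})$ has infinite adhesion, the finite adhesion at each odd stage forces $d=t_D$ to be a limit of $T$; but then the $T$-graph property gives $d$ infinitely many $\GT$-neighbours below it, all of which lie in the $\leq^*$-initial segment $T_{i+1}$ not containing $d$, contradicting the choice of $\leq^*$. No Fodor-type pressing-down is involved anywhere in this section. Finally, for singular $\kappa$ the paper does not merely ``strengthen the closure'' but runs the explicit two-dimensional recursion of Bowler--Carmesin--Komj\'ath--Reiher over indices $(i,j)\in \cf(\kappa)\times\omega_1$, replacing their robustness lemma by Lemma~\ref{lem_closureTrobust}.
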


To see how to obtain a normal spanning tree from this result, the reader may wish to skip to Section~\ref{sec_mainproof} immediately.

\subsection{Normal partition trees}
\label{sec_NPT}

Instead of proving Lemma~\ref{lem_decomposition2} directly, we prove a technical strengthening using the notion of normal partition trees due to Brochet and Diestel from \cite[\S4]{brochet1994normal}:

Let $G$ be a graph and $\set{V_t}:{t \in T}$ be a partition of~$V(G)$ into non-empty
sets~$V_t$. If the index set $T$ of this partition is an order tree $(T,\leq)$, we call $(T,\leq)$ a
{\it partition tree\/} for~$G$. For vertices $v\in G$, we write $t(v)$ for
the node $t\in T$ such that $v\in V_t$. Whenever we speak of a
partition tree $T$ for~$G$, we shall assume that it comes with
a fixed partition of~$V(G)$; the sets $V_t$ and the map $v\mapsto t(v)$
will then be well-defined.

If $T$ is a partition tree for~$G$, we denote by $\GT=G/T$ the graph
obtained from $G$ by contracting the sets~$V_t$ for $t\in T$. We may
then identify $T$ with the vertex set of~$\GT$; thus, two points $t,t'
\in T$ become adjacent vertices of $\GT$ if and only if $G$ contains a
$V_t$--$V_{t'}$ edge. We call $T$ a \emph{normal} partition tree
for~$G$ if the following properties hold:
\begin{enumerate}[label=(\alph*)]
 \item\label{item_NPT1} $\GT$ is a $T$-graph,
 \item\label{item_NPT2} for every $t\in T$, the set $V_t$ is connected in~$G$ (so $\GT$ is a minor of $G$),
 \item\label{item_countablesize} for every $t\in T$, we have either $|V_t| = \cf (\height (t))$ or $|V_t| = 1$.
\end{enumerate}

For a subtree $T' \subset T$, we write $G(T') := G[\bigcup \set{ V_t}:{t \in T'}]$ for the corresponding connected induced subgraph of $G$.
We use the following result by Brochet \& Diestel, see {\cite[Theorem~4.2]{brochet1994normal}}.

\begin{theorem}[Brochet \& Diestel]
\label{thm_brochetdiestel}
Every connected graph has a normal partition tree.
\end{theorem}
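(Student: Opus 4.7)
The plan is to construct the partition tree $T$ and its branch sets $(V_t)$ by transfinite recursion, building a continuous chain $(T_\alpha, (V_t)_{t\in T_\alpha})$ of \emph{partial} normal partition trees---that is, down-closed subtrees $T_\alpha$ of the eventual $T$ whose branch sets satisfy (i) each $V_t$ is connected in $G$, (ii) the size condition~\ref{item_countablesize}, (iii) $G(T_\alpha)/T_\alpha$ is a $T_\alpha$-graph, and, crucially, (iv) for every component $D$ of $G - G(T_\alpha)$, the set $C_D := \set{s\in T_\alpha}:{V_s\cap N(D)\neq\emptyset}$ is a chain in $T_\alpha$. I would pick any root $r\in V(G)$, initialise $T_0 = \{r\}$ with $V_r = \{r\}$, and take unions at limit ordinals; both limits and the initial step preserve the invariants (for~(iv) at limits, any two elements of $C_D$ eventually lie in a common $T_\alpha$, where they must already be comparable).

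At a successor stage $\alpha+1$, if $G(T_\alpha) = G$ the recursion terminates; otherwise pick a component $D$ of $G - G(T_\alpha)$ and consider its associated chain $C_D$. If $C_D$ has a maximum $s$, adjoin a new successor $t$ of $s$ to $T$ and set $V_t := \{v\}$ for any $v\in D$ with a neighbour in $V_s$; the singleton is consistent with~\ref{item_countablesize} since $\height(t)$ is a successor ordinal. If $C_D$ has no maximum, write $\sigma_D$ for its order type and $\lambda := \cf(\sigma_D)$; adjoin a single new node $t$ to $T$ with $\mathring{\lceil t \rceil} = C_D$, so that $\height(t) = \sigma_D$ is a limit of cofinality $\lambda$. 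The task then is to select $V_t \subseteq D$ connected in $G$, of size $\lambda$ (or $1$), meeting $V_s$ for cofinally many $s\in C_D$.

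This limit-case construction is the main obstacle. My plan is to split into two sub-cases. If some $v\in D$ has a neighbour in $V_s$ for cofinally many $s\in C_D$, simply set $V_t := \{v\}$. Otherwise every vertex of $D$ reaches only boundedly many $V_s$'s for $s\in C_D$; then fix a strictly increasing cofinal sequence $(s_i)_{i<\lambda}$ in $C_D$ and inductively pick distinct $v_i \in D$ each adjacent to $V_{s_i}$ (at step $i$, the supremum over $j<i$ of the bounds of previously chosen $v_j$'s still lies below $\sigma_D$ by cofinality, so one may pick $s_i$ beyond it and then $v_i\neq v_j$). Build an increasing chain of connected sets $W_i\subseteq D$ by $W_0 := \{v_0\}$, $W_{i+1} := W_i \cup P_i$ for some finite $W_i$--$v_{i+1}$ path $P_i$ in $D$ (which exists since $D$ is connected), and unions at limits. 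Setting $V_t := \bigcup_{i<\lambda} W_i$ produces a connected subset of $D$ of cardinality exactly $\lambda$ with the required cofinal neighbourhoods.

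It remains to re-verify the invariants after extension. Any edge from $V_t$ to $G(T_\alpha)$ lands in some $V_s$ with $s\in C_D < t$, and the cofinality of lower neighbours of $t$ in $\GT_{\alpha+1}$ is witnessed either by the chosen $v$ (first sub-case) or by the sequence $(s_i)_{i<\lambda}$ (second sub-case); thus~(iii) holds. For~(iv), any new component $D'\subseteq D\setminus V_t$ of $G - G(T_{\alpha+1})$ satisfies $C_{D'}\subseteq C_D\cup\{t\}$, which is a chain because $t$ lies above all of $C_D$ in $T$. The recursion terminates in at most $|V(G)|$ successor steps, as each extension strictly enlarges $V(G(T_\alpha))$; the resulting $T$ with its branch sets is then the desired normal partition tree.
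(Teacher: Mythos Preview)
The paper does not give its own proof of this theorem; it is quoted from Brochet and Diestel \cite[Theorem~4.2]{brochet1994normal} and used as a black box. Your proposal is essentially the original Brochet--Diestel construction, and the outline is correct.

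One imprecision worth flagging: in the limit case you write ``adjoin a new node $t$ with $\mathring{\lceil t\rceil} = C_D$'', but $C_D$ need not be down-closed in $T_\alpha$ (nothing in your invariants forces a component $D$ to send an edge into \emph{every} $V_{s'}$ with $s'$ below some $s\in C_D$). The fix is routine: set $\mathring{\lceil t\rceil} := \lceil C_D\rceil_{T_\alpha}$. Since $C_D$ is cofinal in its down-closure, $\cf(\height(t))$ is still your $\lambda$, the cofinality of the lower neighbours of $t$ in $\GT$ is still witnessed by $C_D$, and the size condition~\ref{item_countablesize} and invariant~(iv) go through unchanged. Similarly, your description of choosing the $(s_i,v_i)$ conflates ``fix a cofinal sequence $(s_i)$ in advance'' with ``pick $s_i$ beyond the bounds of earlier $v_j$''; the intended recursive choice of both simultaneously is clear and works.
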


The intuition behind the last of the above requirements for a normal partition tree is that $T$ should approximate $G$ as best as possible, which happens if the partition sets $V_t$ are small. For the normal partition trees $T$ considered in this paper, the branch sets $V_t$ are always at most countable, see Lemma~\ref{lem_noOmega_1chains} below. Indeed, graphs $G$ all whose minors have countable colouring number cannot contain an uncountable clique minor. 

\begin{lemma}
\label{lem_noOmega_1chains}
Let $G$ be a connected graph not containing an uncountable clique minor. Then all branches of a normal partition tree $T$ for~$G$ are at most countable; in particular all branch sets $V_t$ in $G$ are at most countable. 
\end{lemma}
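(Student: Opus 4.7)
The plan is to argue by contradiction: suppose $T$ has a branch of order type at least $\omega_1$, and I will produce a $K^{\aleph_1}$-minor in $G$. Take the initial $\omega_1$-segment $(b_\alpha)_{\alpha<\omega_1}$ of such a branch, with $b_\alpha$ at $T$-height $\alpha$. Since $\mathring{\lceil b_\alpha \rceil}$ is a chain in $T$ contained in any maximal chain through $b_\alpha$, it equals $\{b_\beta : \beta<\alpha\}$. By property~(a), the induced subgraph $H := \GT[\{b_\alpha : \alpha<\omega_1\}]$ of $\GT$ inherits the $T$-graph property and is thus an $\omega_1$-graph in the paper's sense; by property~(b), $\GT$ is a minor of $G$ via the connected contractions $V_{b_\alpha}$, so $H$ is a minor of $G$ as well. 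It therefore suffices to find $K^{\aleph_1}$ as a minor of $H$.

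For each countable limit $\alpha$, fix a cofinal $\omega$-sequence $C_\alpha$ of down-neighbours of $b_\alpha$ in $H$. The $\Delta$-system lemma applied to $\{C_\alpha\}_\alpha$ produces an uncountable $A \subseteq \omega_1$ and a countable root $R$ with $C_\alpha \cap C_\beta = R$ for distinct $\alpha,\beta \in A$; replacing $A$ by $\{\alpha \in A : \alpha > \sup R\}$ and each $C_\alpha$ by $C_\alpha \setminus R$, the sequences become pairwise disjoint while remaining cofinal in their respective $\alpha$'s. Using Fodor's lemma together with a transfinite recursion, I would extract an uncountable subfamily $A' \subseteq A$ and build pairwise disjoint connected branch sets $W_\alpha$ ($\alpha \in A'$) in $H$, each containing $b_\alpha$ and a carefully chosen tail of $C_\alpha$, arranged so that for all $\alpha < \beta$ in $A'$ some element of $C_\beta$ lands in $W_\alpha$ and thereby supplies the required $H$-edge between $W_\beta$ and $W_\alpha$. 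The resulting family $\{W_\alpha\}_{\alpha \in A'}$ then realises $K^{\aleph_1}$ as a minor of $H$, and hence of $G$, contradicting the hypothesis.

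The hard part will be the recursive construction of the $W_\alpha$: the branch sets must be simultaneously disjoint, connected, and pairwise linked by $H$-edges, which requires a careful interplay between the $\Delta$-system disjointness — so that enough unused vertices remain free at each recursion step — and Fodor's concentration of down-neighbours — so that cofinal sequences indexed by later $\beta \in A'$ actually hit earlier branch sets $W_\alpha$ often enough. A naive Fodor application yields only a star $K^{1,\aleph_1}$, and a naive $\Delta$-system only disjoint anchors with no guaranteed cross-edges; the two devices have to be interleaved. The \emph{in particular} clause is then immediate from countability of branches: every $t \in T$ has countable height, whence by property~(c) we have $|V_t| \leq \max(1, \cf(\height(t))) \leq \aleph_0$.
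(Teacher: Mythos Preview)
Your reduction matches the paper exactly: an uncountable branch yields an $\omega_1$-graph $H$ as an induced subgraph of $\GT \preceq G$, and one then needs $K^{\aleph_1} \preceq H$. At that point the paper simply cites \cite[Proposition~3.5]{DiestelLeaderNST} and is done; it does not reprove this fact. Your derivation of the `in particular' clause from property~(c) is likewise correct and agrees with the paper.

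Your attempt to supply a direct argument for $K^{\aleph_1} \preceq H$, however, has a genuine gap. The $\Delta$-system lemma does \emph{not} apply to a family of $\aleph_1$ countably infinite sets: the Erd\H{o}s--Rado hypothesis would require $\aleph_0^{\aleph_0} < \aleph_1$, which is false, and indeed any $\aleph_1$ pairwise distinct infinite subsets of $\omega$ already form a family admitting no uncountable $\Delta$-subsystem (the petals would have to be pairwise disjoint subsets of a countable set). The additional structure $C_\alpha \subseteq \alpha$ does not rescue this in ZFC. Beyond that, the construction of the branch sets $W_\alpha$ is stated only as a plan; you rightly flag it as ``the hard part'' but do not carry it out, and it is not routine: even granted pairwise disjoint cofinal ladders, there is no reason a given $C_\beta$ should meet a prescribed earlier $W_\alpha$, since the down-neighbours of $b_\beta$ are merely cofinal below $\beta$ and need not hit any fixed bounded interval. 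A self-contained proof of the Diestel--Leader proposition does exist, but it proceeds by exploiting the connectedness of intervals in an $\omega_1$-graph rather than via a $\Delta$-system.
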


\begin{proof}
If $T$ contains an uncountable branch, then by \ref{item_NPT1} and \ref{item_NPT2} the minor $\GT$ of $G$ contains an $\omega_1$-graph as a subgraph. But by \cite[Proposition~3.5]{DiestelLeaderNST}, every $\omega_1$-graph contains a $K^{\omega_1}$ minor, a contradiction. In particular, $\height(t) < \omega_1$ for all $t \in T$, 
and so the second assertion follows from property \ref{item_countablesize} of normal partition trees.
\end{proof}

We can now state the decomposition result in the form we want to prove it:

\begin{lemma}[Decomposition Lemma, $T$-graph variant]
\label{lem_decomposition}
Let $G$ have uncountable size $\kappa$ with the property that all its minors have countable colouring number. Then any normal partition tree $T$ for $G$ can be written as a continuous increasing union $ \bigcup_{i < \cf(\kappa)} T_i$ of infinite, ${<}\kappa$-sized rooted subtrees $T_i$ such that all graphs $ G(T_i)$ have finite adhesion in $G$.
\end{lemma}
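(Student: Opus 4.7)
My approach is a transfinite recursion exploiting two structural inputs: the countable colouring number of the minor $\dot{G}$ (a minor of $G$ via the connected branch sets $V_t$, hence of countable colouring number by hypothesis), and the $T$-graph structure from Lemma~\ref{lem_Tgraphproperties}. First I would fix a well-order $\leq^*$ on $T = V(\dot{G})$ witnessing countable colouring number of $\dot{G}$, so that $B(t) := \{s : s <^* t \text{ and } st \in E(\dot{G})\}$ is finite for every $t \in T$. By Lemma~\ref{lem_noOmega_1chains} every branch of $T$ is countable, hence $|T| = \kappa$ and $T$ has height at most $\omega_1$. Enumerate $T = \{t_\beta : \beta < \kappa\}$.

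Next I would inductively build a continuous $\subseteq$-increasing family $(X_\alpha)_{\alpha < \kappa}$ of rooted subtrees with $|X_\alpha| \leq |\alpha| + \aleph_0 < \kappa$, $t_\alpha \in X_{\alpha+1}$, and closed under both $T$-down-closure and $\leq^*$-back-neighbours. At successor stages the two closures are enforced simultaneously by iterating $X \mapsto \downcl{X \cup \bigcup_{t \in X} B(t)}$ for $\omega$ many steps (each step adds countably many elements, since $B(t)$ is finite and branches are countable); at limit stages take unions. The desired $T_i$'s will come from a continuous cofinal subsequence $(X_{\alpha_i})_{i < \cf(\kappa)}$ where each $\alpha_i$ has uncountable cofinality.

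The crux is to show that each such $G(X_{\alpha_i})$ has finite adhesion. By Lemma~\ref{lem_Tgraphproperties}\ref{itemT3}, the components of $G - G(X_{\alpha_i})$ are exactly $D_t := G[\bigcup_{t' \in \upcl{t}} V_{t'}]$ for $t$ minimal in $T \setminus X_{\alpha_i}$. Assuming $|N(D_t) \cap V(G(X_{\alpha_i}))| \geq \aleph_0$, extract distinct $T$-projections $s_n \in X_{\alpha_i}$ with witness edges $s_n t'_n \in E(\dot{G})$ and $t'_n \in \upcl{t}$. Closure under $B$ together with $t'_n \notin X_{\alpha_i}$ forces $s_n <^* t'_n$, so $s_n \in B(t'_n)$. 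If only finitely many $t'_n$ appear, then some $B(t')$ contains infinitely many $s_n$, contradicting $|B(t')| < \aleph_0$; otherwise the entry stages of the $s_n$'s are bounded below $\alpha_i$ (using $\cf(\alpha_i) > \omega$), and applying Fodor's lemma (Lemma~\ref{lem_stationary}) via a regressive function defined on the stationary set of $\alpha$'s where this infinite-branching scenario occurs yields a single back-neighbourhood required to absorb infinitely many $s_n$'s, contradicting countability of colouring.

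The main obstacle will be lifting finite adhesion from $\dot{G}$ to $G$: by property~\ref{item_countablesize} of normal partition trees, the branch sets $V_s$ at limit heights are countably infinite, so a single $V_s$ adjacent to $D_t$ may contribute countably many vertices to $N(D_t)$ even when only finitely many $T$-projections $s$ occur. To handle this I would run a parallel closure at the vertex level using the countable colouring number of $G$ itself (another minor of $G$), bounding each $V_s \cap N(D_t)$ vertex-by-vertex through the same back-neighbour closure; combining both levels of the recursion then yields the chain $T_i := X_{\alpha_i}$ with the required properties.
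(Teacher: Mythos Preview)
Your outline has the right high-level ingredients (use the countable colouring number of $\dot G$ together with the $T$-structure), but there are two genuine gaps.

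\medskip
\textbf{The lift from $\dot G$ to $G$ does not work as sketched.} You correctly isolate the problem in your final paragraph: a single branch set $V_s$ with $s<_T t$ may contribute infinitely many $G$-vertices to $N(D_t)$ even when only finitely many $T$-projections occur. Your proposed fix---closing also under back-neighbours with respect to a colouring order on $V(G)$---does not resolve this. With that closure, each $v\in N(D_t)\cap V_s$ is forced to precede its witnessing neighbour $w_v\in D_t$, so $v\in B_G(w_v)$; but since the $w_v$'s may range over all of $D_t$, finiteness of each $B_G(w)$ gives no bound on the number of $v$'s. The paper circumvents this by proving a separate closure lemma (Lemma~\ref{lem_closureTrobust}): every infinite $X\subset T$ extends to a rooted subtree $T'$ of the same size with $G(T')$ of finite adhesion, via an $\omega_1$-recursion that adds the $T$-minimal node $t_D$ of every bad component. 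Termination and size control come from forbidding uncountable branches, barricade minors, and Aronszajn-tree minors. In the regular case the paper then \emph{alternates} this closure with taking $\dot{\leq}$-initial segments. For the resulting $\omega$-union $T_{i+1}$, if some component $D$ has infinite $N(D)$, one uses the finite adhesion of the odd stages to show that $d=t_D$ must be a \emph{limit} node of $T$; then $d$ has infinitely many $\dot G$-down-neighbours, all lying in the $\dot{\leq}$-initial segment $T_{i+1}$, contradicting the colouring order. Your back-neighbour closure would suffice for this last step, but without something like Lemma~\ref{lem_closureTrobust} you cannot force $d$ to be a limit.

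\medskip
\textbf{The cofinality bookkeeping and the singular case are not right.} A continuous increasing sequence $(\alpha_i)_{i<\cf(\kappa)}$ cofinal in $\kappa$ cannot have every $\alpha_i$ of uncountable cofinality: at any limit stage $\ell$ with $\cf(\ell)=\omega$ one has $\cf(\alpha_\ell)\leq\omega$. Consequently your finite-adhesion argument, which explicitly invokes $\cf(\alpha_i)>\omega$, cannot apply to all terms of the chain. Moreover the Fodor step is not well-defined as written (it is unclear what regressive function on which stationary set is meant, and how a pressing-down over many $\alpha$'s helps for a fixed $\alpha_i$). Finally, for singular $\kappa$---especially $\cf(\kappa)=\omega$---an $\omega$-union of finite-adhesion subgraphs need not have finite adhesion (Remark~\ref{rem_Trobust}); the paper treats this case separately via a double recursion $(T_{i,j})_{i<\cf(\kappa),\,j<\omega_1}$ in the style of \cite{bowler2015colouring}, so that each $T_i=\bigcup_{j<\omega_1}T_{i,j}$ is an $\omega_1$-union and inherits finite adhesion.
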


Indeed, it follows from Lemma~\ref{lem_noOmega_1chains} that $G_i := G(T_i)$ are as desired for Lemma~\ref{lem_decomposition2}.

\subsection{A closure lemma} The proof of Lemma~\ref{lem_decomposition} relies on a closure lemma. In it, we use that the following two types of graphs have uncountable colouring number and therefore cannot appear as minors of $G$:
\begin{enumerate}[label=(\roman*)]
	\item\label{item_barricade} A \emph{barricade}, i.e.\ a bipartite graph with bipartition $(A,B)$ such that $|A| < |B|$ and every vertex of $B$ has infinitely many neighbours in $A$, cf.\ \cite[Lemma~2.4]{bowler2015colouring}.
	\item\label{item_aroszajn} An \emph{Aronszajn tree-graph}, i.e.\ a $T$-graph for an Aronszajn tree $T$, cf.\ \cite[Theorem~7.1]{DiestelLeaderNST}.
\end{enumerate}

\begin{lemma}
\label{lem_closureTrobust}
Let $G$ have uncountable size $\kappa$ with the property that all its minors have countable colouring number, and let $T$ be a normal partition tree for $G$. Then every infinite $X \subset T$ is included in a rooted subtree $T' \subset T$ with $|X| = |T'|$ such that $G(T')$ has finite adhesion in $G$.
 \end{lemma}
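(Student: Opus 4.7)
The plan is to construct $T'$ as the union $\bigcup_{n < \omega} T_n$ of an $\omega$-chain of rooted subtrees of $T$, each of size $|X|$. I will start with $T_0 := \lceil X \rceil_T$; since Lemma~\ref{lem_noOmega_1chains} gives that every element of $T$ has at most countable down-closure, $|T_0| \leq |X| \cdot \aleph_0 = |X|$. Given $T_n$, I form $T_{n+1}$ by resolving the \emph{bad} components of $G - G(T_n)$, meaning those components $D$ with $|N(D)|$ infinite. By property \ref{item_NPT2} of normal partition trees together with Lemma~\ref{lem_Tgraphproperties}\eqref{itemT3}, each such $D$ has the form $G(\upcl{t})$ for a unique $t$ minimal in $T \setminus T_n$.

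The first key step is a size bound: at each stage $n$ there are at most $|X|$ bad components. Otherwise I split into two cases to produce a forbidden barricade minor of $G$. If more than $|X|$ bad $t$'s have $N(\upcl{t}) \cap T_n$ infinite in $\GT$, then contracting each $V_s$ ($s \in T_n$) and each $\upcl{t}$ (both connected by \ref{item_NPT2} and Lemma~\ref{lem_Tgraphproperties}\eqref{itemT2}) to a point yields a bipartite minor which is a barricade with $A = T_n$ and $B$ the bad $t$'s. Otherwise more than $|X|$ bad $t$'s satisfy $N(\upcl{t}) \cap T_n$ finite in $\GT$ but $|V_{s_t} \cap N(D_t)| = \infty$ for some $s_t \in T_n$; pigeonhole then yields a single $s^* \in T_n$ serving as $s_t$ for more than $|X|$ many such $t$, and taking $A = V_{s^*}$ (countable) against $B = \{t : s_t = s^*\}$ gives a barricade. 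Both cases contradict~\ref{item_barricade}.

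For each of the at most $|X|$ bad $t$'s at stage $n$, I will add to $T_{n+1}$ a suitable countable rooted subtree $S_t \subseteq \upcl{t}$---for instance a countable cofinal branch of $\upcl{t}$---chosen to break up the infinite adhesion. Since branches of $T$ are countable by Lemma~\ref{lem_noOmega_1chains}, each $|S_t| \leq \aleph_0$, and hence $|T_{n+1}| \leq |T_n| + |X| \cdot \aleph_0 = |X|$. Setting $T' := \bigcup_n T_n$ then yields a rooted subtree of size $|X|$ containing $X$.

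The hardest part will be verifying that $G(T')$ actually has finite adhesion. Failure would mean some $t_\infty$ minimal in $T - T'$ has $|N(D_{t_\infty})| = \infty$, where $t_\infty$ is necessarily a limit node that ``escaped'' every stage's resolution. I expect to derive a contradiction by assembling the cascade of unresolved bad components across stages into an Aronszajn tree-graph minor of $G$, contradicting~\ref{item_aroszajn}. The main obstacle is to choose each $S_t$ carefully enough that any persisting obstruction truly organizes into an Aronszajn structure; this may force iterating $\omega_1$ times rather than $\omega$ and invoking a Fodor-style closure argument to terminate at some stage of size $|X|$ where no bad components remain.
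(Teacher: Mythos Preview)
Your proposal has the right skeleton but misplaces the two key obstruction arguments and leaves the actual construction underdetermined.

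The paper's construction is simpler than yours: iterate $\omega_1$ times from the outset, and at step $i+1$ add only the single node $t_D$ (the $T$-minimum of each bad component $D$ of $G - G(T_i)$), not a whole countable subtree. Then if some component $D$ of $G - G(T')$ still had infinite adhesion, for every stage $i \geq i_0$ the component $D_i \supseteq D$ of $G - G(T_i)$ is bad too, and since $t_{D_i} \in T_{i+1}$ the minima $t_{D_i}$ form a strictly increasing $\omega_1$-chain in $T$, immediately contradicting Lemma~\ref{lem_noOmega_1chains}. No Aronszajn tree is needed for finite adhesion.

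The Aronszajn tree argument appears instead in the \emph{size} bound, and only when $|X| = \aleph_0$. Each successor step preserves size $|X|$ by a barricade argument; your case-split is unnecessary, since one may simply take $A = V(G(T_j))$, which has size $|T_j| \cdot \aleph_0 = |X|$ because all branch sets $V_t$ are countable by Lemma~\ref{lem_noOmega_1chains}. If $|X|$ is uncountable, the $\omega_1$-union still has size $|X|$. If $|X| = \aleph_0$ but $|T'| = \aleph_1$, then contracting the countable rooted subtree $T_0$ to a single root yields a $T''$-graph minor of $\GT$ in which level $i$ is $T_i \setminus \bigcup_{j<i} T_j$; thus $T''$ has countable levels and (by Lemma~\ref{lem_noOmega_1chains} again) no uncountable branches, so it is an Aronszajn tree-graph, contradicting~\ref{item_aroszajn}.

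Your plan to ``break up the infinite adhesion'' by adding a cofinal branch of $\upcl{t}$ is not well-defined and there is no reason it would terminate in $\omega$ steps; and the proposed ``Fodor-style termination at some stage with no bad components'' does not reflect what actually happens: bad components may persist at every stage $i < \omega_1$, and it is only the full $\omega_1$-union that is guaranteed to have finite adhesion.
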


\begin{proof}
For a connected subgraph $D \subset G$ write $t_D$ for the by Lemma~\ref{lem_Tgraphproperties}(\ref{itemT2}) unique $T$-minimal element of $\set{t(v)}:{v \in D}$. We recursively build a $\subseteq$-increasing sequence $\set{T_i}:{i < \omega_1}$ of
rooted subtrees of $T$ by letting $T_0 = \lceil X \rceil_T$, defining 
$$ T_{i+1} = T_i \cup \set{t_D}:{D \text{ a component of } G - G(T_i) \text{ with } |N(D) \cap G(T_i)| = \infty}$$
at successor steps, and $T_\ell = \bigcup_{i < \ell} T_i$ for limit ordinals $\ell < \omega_1$. Finally we set $T' = \bigcup_{i < \omega_1} T_i$. Clearly, $T'$ is a rooted subtree of $T$ including $X$. 

To see that $G(T')$ has finite adhesion in $T$, suppose for a contradiction that there is a component $D$ of $G-G(T')$ with $| N(D) \cap G(T') | = \infty$. Then there is some $i_0 < \omega_1$ such that $|N(D) \cap G(T_{i_0})| = \infty$. Hence for all $i_0 \leq i < \omega_1$, the unique component $D_i$ of $G - G(T_i)$ containing $D$ also satisfies $|N(D_i) \cap G(T_i)| = \infty$. 
Then $\set{t_{D_i}}:{i_0 \leq i < \omega_1}$ forms an uncountable chain in $T$, contradicting Lemma~\ref{lem_noOmega_1chains}.

To see that $|T'| = |X|$, observe that since $T$ contains no uncountable chains by Lemma~\ref{lem_noOmega_1chains}, 
we have $|T_0| = |X|$. We now prove by transfinite induction on $i< \omega_1$ that $|T_i| = |X|$. The cases where $i$ is a limit are clear, so suppose $i = j+ 1$. By the induction hypothesis, $|T_j| = |X|$. If $|T_{j+1}|>|T_j|$, then the 
bipartite minor of $G$ obtained by contracting all components $D$ of $G - G(T_j)$ with $t_D \in T_{j+1} \setminus T_j$ to form the $B$-side, and all vertices in $G(T_j)$ forming the $A$-side would be a barricade $(A,B)$ by the second part of Lemma~\ref{lem_noOmega_1chains}, contradicting \ref{item_barricade}. 

If $X$ is uncountable, then $|T'| = | \bigcup_{i < \omega_1} T_i | = \aleph_1 \cdot |X| = |X|$. So suppose for a contradiction that $X$ is countable and $|T'| = \aleph_1$. Contracting the rooted subtree $T_0$ to a vertex $r$ in $T'$ gives rise to an order tree $T''$ with root $r$. Since $T_0 \subset T'$ is a rooted subtree and so $\GT[T_0]$ is connected, this contraction results in a minor $G''$ of $\GT$ which is a $T''$ graph. By construction, nodes in $T_{i} \setminus \bigcup_{j < i} T_j$ for $i \geq 1$ belong to the $i$th level of $T''$, and hence all levels of $T''$ are countable. Finally, since $T''$ like $T'$ and $T$ contains no uncountable chains, it follows that $T''$ is an Aronszajn tree. Since $G'' \preceq \GT \preceq G$, we have found an Aronszajn tree minor of $G$, contradicting \ref{item_aroszajn}.
\end{proof}

\subsection{Proof of Lemma~\ref{lem_decomposition}}
We are now ready to prove our Decomposition Lemma~\ref{lem_decomposition}. The proof will be divided into two cases depending on whether $\kappa = |G|$ is regular or singular.

\begin{proof}[Proof of Lemma~\ref{lem_decomposition} for regular uncountable $\kappa$]
Let $\dot{\leq}$ be a well-order of $V(\GT)$ witnessing that $\GT$ has countable colouring number, i.e.\ so that every vertex has only finitely many neighbours preceding it in $\dot{\leq}$. We may choose $\dot{\leq}$ to be of order type $|\GT|$, see e.g.\ \cite[Corollary~2.1]{EGJKP19}.

By the `in particular part' of Lemma~\ref{lem_noOmega_1chains}, we have $\kappa = |G|=|\GT| = |T|$. Fix an enumeration $V(T)=\set{t_i}:{i < \kappa}$. We recursively define a continuous increasing sequence $\set{T_i}:{i < \kappa}$ of rooted subtrees of $T$ with 
\begin{enumerate}
	\item $t_i \in T_{i+1}$ for all $i < \kappa$,
	\item each $G(T_i)$ has finite adhesion in $G$,
	\item the vertices of $T_i$ form a proper initial segment of $(V(\GT),\dot{\leq})$
\end{enumerate}
Let $T_0 = \emptyset$. In the successor step, suppose that $T_i$ is already defined. Let $T_i^0 := T_i \cup \lceil t_i \rceil$. At odd steps, use Lemma~\ref{lem_closureTrobust} to fix  a rooted subtree $T_i^{2n+1}$ of $T$ including $T_i^{2n}$ of the same size as $T_i^{2n}$ so that $G(T_i^{2n+1})$ has  finite adhesion in $G$. At even steps, let $T_i^{2n+2}$ be the smallest subtree of $T$ including the down-closure of $T_i^{2n+1}$ in $(V(\GT),\dot{\leq})$.
Define $T_{i+1} = \bigcup_{n \in \N}T_i^n$. By construction, $T_{i+1}$ is a rooted subtree of $T$ with $t_i \in T_{i+1}$, 
and $T_{i+1}$ forms an initial segment of $(V(\GT),\dot{\leq})$. To see that this initial segment is proper, one verifies inductively that $|T_i^{n}| < \kappa$; since $\kappa$ has uncountable cofinality, this gives $|T_{i+1}| < \kappa$, too. 

Hence, it remains to show that $G(T_{i+1})$ has finite adhesion in $G$. 
Suppose otherwise that there exists a component $D$ of $G-G(T_{i+1})$ with infinitely many neighbours in $G(T_{i+1})$. If we let $d =t_D$, then $t(N(D)) \subset \mathring{\lceil d \rceil}_T$ holds by definition of a normal partition tree. We claim that $d$ must be a limit of $T$. Indeed, for any $x <_T d$, Lemma~\ref{lem_Tgraphproperties}(\ref{itemT3}) implies that $x \in T_i^{2n+1}$ for some $n \in \N$. Since $G(T_i^{2n+1})$ has finite adhesion, it follows that $N(D) \cap G(T_i^{2n+1})$ is finite. In particular, only finitely many neighbours $v \in N(D)$ satisfy $t(v) \leq_T x$. Hence, at least one neighbour $v \in N(D)$ satisfies $x <_T t(v) <_T d$; 
so $d$ is a limit.

 By the definition of a $T$-graph, $d$ has infinitely many  $\GT$-neighbours below it, and hence in $T_{i+1}$. However, since $T_{i+1}$ forms an initial segment in $(V(\GT),\dot{\leq})$ not containing $d$, it follows that $d$ is preceded by infinitely many of its neighbours in $\dot{\leq}$, contradicting 
 the choice of $\dot{\leq}$.

For limits $\ell < \kappa$ we define $T_\ell = \bigcup_{i < \ell} T_i$. One verifies as above that $T_\ell$ is a rooted subtree of $T$ that forms a proper initial segment in $(V(\GT),\dot{\leq})$ such that $G(T_\ell)$ has finite adhesion in $G$.
\end{proof}

\begin{proof}[Proof of Lemma~\ref{lem_decomposition} for singular uncountable $\kappa$]
This case follows \cite[\S 4]{bowler2015colouring}, where we replace the notion of \emph{robustness} from \cite[Definition~4.1]{bowler2015colouring} by our notion of having \emph{finite adhesion} from Definition~\ref{def_Trobust}, and use Lemma~\ref{lem_closureTrobust} and Remark~\ref{rem_Trobust} instead of \cite[Lemma~4.3]{bowler2015colouring} and \cite[Remark~4.2]{bowler2015colouring}. The complete argument follows for convenience of the reader.

Let us enumerate $V(T) = \set{t_i}:{i < \kappa}$ and fix a continuous increasing sequence $\set{\kappa_i}:{i < cf(\kappa)}$ of cardinals with limit $\kappa$, where $\kappa_0 > cf(\kappa)$ is uncountable.
We build a family $$\set{T_{i,j}}:{i < cf(\kappa), \; j < \omega_1}$$ of rooted subtrees of $T$ with $G(T_{i,j})$ of finite adhesion in $G$, with each $T_{i,j}$ of size $\kappa_i$. This will be done by a nested recursion on $i$ and $j$. When we come to choose $T_{i,j}$, we will already have chosen all $T_{i',j'}$ with $j' < j$, or with both $j' = j$ and $i' < i$. Whenever we have just selected such a subtree $T_{i,j}$, we fix immediately an arbitrary enumeration $\set{t^k_{i,j}}:{k < \kappa_i}$ of this tree. We impose the following conditions on this construction:
\begin{enumerate}
	\item $\set{t_k}:{k < \kappa_i} \subset T_{i,0}$ for all $i$,
	\item $\bigcup \set{T_{i',j'}}:{i' \leq i, j' \leq j} \subset T_{i,j}$ for all $i$ and $j$,
	\item $\set{t^k_{i',j}}:{k < \kappa_i} \subset T_{i,j+1}$ for all $i < i' < cf(\kappa)$ and $j$.
\end{enumerate}
These three conditions specify some collection of $\kappa_i$-many vertices which must appear in $T_{i,j}$. By Lemma~\ref{lem_closureTrobust} we can extend this collection to a rooted subtree $T_{i,j}$ of the same size such that $G(T_{i,j})$ has finite adhesion in $G$. This completes the description of our recursive construction.

Condition (3) ensures that
\begin{enumerate}
	\item[(4)] $T_{\ell,j} \subset \bigcup_{i < \ell} T_{i,j+1}$ for all limits $\ell < cf(\kappa)$ and all $j$.
\end{enumerate}
Indeed, since $\kappa_\ell = \bigcup_{i < \ell} \kappa_i $ by continuity of our cardinal sequence, it follows $T_{\ell,j} = \set{t^k_{\ell,j}}:{k < \kappa_\ell} = \bigcup_{i < \ell} \set{t^k_{\ell,j}}:{k < \kappa_i} \subset \bigcup_{i < \ell} T_{i,j+1}$.
Now for $i < cf(\kappa)$, the set $T_i = \bigcup_{j < \omega_1} T_{i,j}$ yields a subgraph $G(T_i)$ of finite adhesion in $G$ by Remark~\ref{rem_Trobust}. Further, the sequence $\set{T_i}:{i < cf(\kappa)}$ is increasing by (2) and continuous by (4). This completes the proof.
\end{proof}

\section{From finite adhesion to normal spanning trees}
\label{sec_mainproof}

\subsection{Normal spanning trees} In this section we complete the proof of our main result:


\main*

\begin{proof}
The forwards implication follows from the fact that graphs with normal spanning trees have countable colouring number, and that the property of having a normal spanning tree is closed under taking connected minors. We prove the backwards implication by induction on $\kappa = |G|$. We may assume that $\kappa$ is uncountable, and that the statement holds for all graphs $G'$ with $|G'|<\kappa$. 
Let $ \set{G_i}:{i < \sigma}$ be a continuous chain of subgraphs of finite adhesion in $G$ from Lemma~\ref{lem_decomposition2} with $|G_i| < \kappa$ for all $i < \sigma$.

We construct by recursion on $i < \sigma$ a sequence of normal trees $\set{T_i}:{i < \sigma}$ in $G$ extending each other all with the same root, such that each tree $T_i$ contains $V(G_i)$ cofinally.\footnote{The trees $T_i$ are graph theoretic subtrees of $G$ and not to be confused with the normal partition trees from Section~\ref{sec_decomp}.} Once the recursion is complete, $T = \bigcup_{i < \sigma} T_i$ is the desired normal spanning tree for $G$.

It remains to describe the recursive construction. At limits we may simply take unions. So assume that we have already defined a normal tree $T_i$ in $G$ that cofinally contains $V(G_i)$ (the same argument applies to the base case by setting $T_{-1} = \emptyset$). In order to extend $T_i$ to a normal tree $T_{i+1}$, we rely on the following two claims, to be proved below.

\begin{claim}
\label{clm_containingcofinally2}
Every rooted tree $T_i$ containing $V(G_i)$ cofinally has finite adhesion in $G$.
\end{claim}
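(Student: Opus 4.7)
The plan is to take any component $D$ of $G - T_i$ and split $N(D) \subseteq V(T_i)$ into its parts inside $V(G_i)$ and inside $V(T_i) \setminus V(G_i)$, bounding both against the finite adhesion set coming from $G_i$. Since $V(G_i) \subseteq V(T_i)$, such a $D$ is automatically disjoint from $V(G_i)$ and hence is contained in a unique component $D'$ of $G - G_i$; finite adhesion of $G_i$ from Lemma~\ref{lem_decomposition2} then yields that $F := N_G(D') \cap V(G_i)$ is finite, and $N(D) \cap V(G_i) \subseteq F$ follows immediately.

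The crucial step is to control $N(D) \setminus V(G_i)$. Any such $v$ lies in $V(T_i) \setminus V(G_i)$ and is $G$-adjacent to some $d \in D \subseteq D'$; since $v \notin V(G_i)$ and $D'$ is a component of $G - V(G_i)$, this forces $v \in D'$. Now the cofinality of $V(G_i)$ in $T_i$ furnishes some $u \in V(G_i)$ with $v \leq u$ in the tree order, and the (finite!) $T_i$-path from $v$ down to $u$ starts inside $D'$ but ends at $u \notin D'$; it therefore contains an edge from some $D'$-vertex to a vertex outside $D'$. That outside endpoint is a $G$-neighbour of $D'$ lying outside $D'$, so it belongs to $V(G_i) \cap N_G(D') = F$, and it is a tree-descendant of $v$; hence $v \in \lceil f \rceil_{T_i}$ for some $f \in F$.

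Putting these pieces together gives $N(D) \subseteq \bigcup_{f \in F} \lceil f \rceil_{T_i}$, which is a finite union of finite root-to-$f$ paths (since $T_i$ is a graph-theoretic tree), hence finite. I expect the main technical point to be the extraction of an $F$-descendant of $v$ from the cofinality hypothesis: this is where the combination of (i) $V(G_i) \subseteq V(T_i)$ cofinally and (ii) finite adhesion of $G_i$ in $G$ actually gets used, via the observation that a $T_i$-path leaving $D'$ must cross into $D'$'s neighbourhood $F$. Notably, normality of $T_i$ is not needed for this particular claim.
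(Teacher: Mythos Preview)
Your argument is correct and follows the same overall plan as the paper: split $N(D)$ according to whether a vertex lies in $V(G_i)$ or not, and control the first part by the finite adhesion set $F = N(D')$ of the ambient component $D'$ of $G-G_i$.

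The difference lies in how the second part $N(D)\setminus V(G_i) \subseteq T_i \cap D'$ is bounded. The paper factors this through an auxiliary Lemma~\ref{lem_containingcofinally}, which shows more generally that $T_i \cap D'$ is finite: assuming it were infinite, one recursively picks vertices $d_n \in T_i \cap D'$ outside the down-closure of the previously constructed paths, then uses cofinality to find $h_n \in V(G_i)$ above $d_n$, obtaining infinitely many pairwise disjoint $D'$--$G_i$ paths in $T_i$ and contradicting finite adhesion of $G_i$. You instead take any single $v \in T_i \cap D'$, use cofinality to find $u \in V(G_i)$ above it, and observe that the $v$--$u$ path in $T_i$ must leave $D'$ through $F$; hence $v \in \lceil F \rceil_{T_i}$, and the whole of $N(D)$ sits inside the finite set $\lceil F \rceil_{T_i}$. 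Your route is more direct and yields the mildly stronger structural conclusion $N(D) \subseteq \lceil F \rceil_{T_i}$, while the paper's lemma gives the reusable statement that $T_i$ meets each component of $G-G_i$ only finitely. (One cosmetic point: your path from $v$ to $u$ goes \emph{up}, not ``down'', but your conclusion $v \in \lceil f \rceil_{T_i}$ is stated correctly.)
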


\begin{claim}
\label{clm_stronginduction}
Every $V(G_i)$ is a countable union of dispersed sets in $G$.  
\end{claim}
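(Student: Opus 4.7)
My plan is to apply the inductive hypothesis to $G_i$ and then lift the resulting dispersedness from $G_i$ to the ambient graph $G$ by exploiting the finite adhesion of $G_i$ in $G$.

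First, I note that $G_i$ is connected, of size $|G_i|<\kappa$, and every minor of $G_i$ is a minor of $G$, hence has countable colouring number. The inductive hypothesis therefore provides a normal spanning tree $S_i$ of $G_i$. Applying Jung's theorem (Theorem~\ref{thm_Jung}) inside $G_i$ to the trivially cofinal set $V(S_i)=V(G_i)$ yields a decomposition $V(G_i)=\bigcup_{n\in\omega} U_n$ with each $U_n$ dispersed in $G_i$.

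It remains to show that each $U_n$ is also dispersed in $G$. Fix $U:=U_n$ and a ray $R$ in $G$. If $R$ has a tail in some component $D$ of $G-V(G_i)$, then the finite adhesion set $N(D)$ together with the finite initial segment of $R$ separates $R$ from $U\subset V(G_i)$ in $G$. Otherwise $R$ meets $V(G_i)$ in infinitely many vertices; using that every traversal of a component $D$ of $G-V(G_i)$ by the ray $R$ consumes two fresh vertices of the finite set $N(D)$, I would apply the Star--Comb lemma in the connected graph $G_i$ to the infinite set $R\cap V(G_i)$ to produce a ray $R'$ in $G_i$ sharing infinitely many vertices with $R$, hence end-equivalent to $R$ in $G$. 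Dispersedness of $U$ in $G_i$ then provides a finite $S_0\subset V(G_i)$ separating $R'$ from $U$ in $G_i$; by end-equivalence, any finite separator of $R'$ from $U$ in $G$ is also a separator of $R$ from $U$.

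The main obstacle is to promote $S_0$ to a finite separator of $R'$ from $U$ in the ambient graph $G$: a component $D$ of $G-V(G_i)$ whose adhesion set $N(D)$ straddles the $S_0$-cut of $G_i$ can reconnect the two sides in $G$ via paths inside $D$. I plan to rule out the existence of infinitely many such bridging components by showing that, upon contracting each bridging $D$ together with the two sides of the $S_0$-cut, they would produce a minor of $G$ whose structure contradicts the countable colouring number hypothesis, in the spirit of the forbidden minor arguments used in Lemma~\ref{lem_closureTrobust}. Once only finitely many bridges survive, enlarging $S_0$ by one vertex from each remaining $N(D)$ yields the desired finite separator of $R$ from $U$ in $G$, completing the proof.
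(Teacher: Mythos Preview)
Your approach has a genuine gap at the bridging step. Dispersedness in $G_i$ does \emph{not} lift to dispersedness in $G$ under finite adhesion alone, and the forbidden-minor argument you sketch cannot rescue it. Consider the following local configuration (which can sit inside a decomposition of an uncountable $G$): let $G_i$ consist of a root $r$ with children $a_1,a_2,\ldots$, and above $a_1$ a ray $R_1 = v_1 v_2 \cdots$. Then $G_i$ is its own normal spanning tree, and the level $U := \{a_i : i \in \N\}$ is dispersed in $G_i$. Now let $G$ contain, for each $i \geq 2$, a component $D_i$ of $G - G_i$ with $N(D_i) = \{v_i, a_i\}$. Each adhesion set has size $2$, so $G_i$ still has finite adhesion. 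For any finite $S_0 \subset V(G_i)$ separating a tail of $R_1$ from $U \setminus S_0$ in $G_i$, infinitely many of the $D_i$ bridge the cut, and the minor you obtain by contracting them (together with the two sides) is countable and has countable colouring number --- no contradiction arises. In fact $U$ is simply not dispersed in $G$ at all, so no amount of enlarging $S_0$ can help. (Incidentally, even when only finitely many bridges occur, adding \emph{one} vertex of $N(D)$ does not block paths through $D$; you would need all of $N(D)$.)

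The paper resolves this by applying the induction hypothesis not to $G_i$ but to its \emph{dominated torso} $\hat G_i \preceq G$, a minor of the same size in which every infinite family of components with a common adhesion set $A$ is absorbed into $A$ (turning $A$ into a clique), while the finitely many remaining components become dominating vertices over their adhesion sets. Because $\hat G_i$ already encodes the external connectivity through $G - G_i$, a finite separator in $\hat G_i$ between a ray and $U_n$ translates, via a canonical projection of paths, into a finite separator in $G$. The missing idea in your plan is precisely this torso construction: you must feed the outside connectivity into the graph \emph{before} invoking the induction hypothesis, not try to repair it afterwards.
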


Resuming our construction, since $T_i$ is normal in $G$, the neighbourhood of every component $D$ of $G - T_i$ forms a chain in $T_i$. By Claim~\ref{clm_containingcofinally2}, this chain is finite, so there exists a maximal element $t_D \in N(D)$ in the tree order of $T_i$. Choose a neighbour $r_D$ of $t_D$ in $D$.   
By Claim~\ref{clm_stronginduction}, the set $V(G_{i+1})$ and hence $V(G_{i+1}) \cap D$ is a countable union of dispersed sets in $G$. By Jung's Theorem~\ref{thm_Jung}, there is a normal tree $T_D \subset D$ with root $r_D$ cofinally containing $V(G_{i+1}) \cap D$. Then the union of $T_i$ together with all $T_D$ and edges $t_Dr_D$ for all components $D$ of $G - T_i$ with $V(G_{i+1}) \cap D \neq \emptyset$, is a normal tree $T_{i+1}$ in $G$ containing $V(G_{i+1})$ cofinally.
\end{proof}

With a  similar proof, one obtains the perhaps interesting result that if a connected graph $G$ has a tree decomposition of finite adhesion such that all torsos have normal spanning trees, then $G$ itself has normal spanning tree.

\subsection{Proof of Claim~\ref{clm_containingcofinally2}}
For the proof, we need a simple lemma.

\begin{lemma}
\label{lem_containingcofinally}
Let $H \subset G$ be a subgraph of finite adhesion. If $T$ is a rooted tree containing $V(H)$ cofinally, then any component $D$ of $G - H$ satisfies $|D \cap T| < \infty$.
\end{lemma}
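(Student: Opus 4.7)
The plan is to show, for every $v \in V(D) \cap V(T)$, the existence of a strict $T$-descendant $v^{*} \in N(D)$. Once such an assignment $v \mapsto v^{*}$ is defined, the lemma is immediate: since $N(D)$ is finite by the assumption that $H$ has finite adhesion in $G$, and since any fixed vertex of the graph-theoretic rooted tree $T$ has only finitely many ancestors (the unique path from the root has finite length), the map has finite preimages, forcing its domain $V(D)\cap V(T)$ to be finite.

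To produce $v^{*}$, I will fix $v \in V(D) \cap V(T)$. Because $T$ contains $V(H)$ cofinally in its tree order, there exists $w \in V(H)$ with $v \leq_T w$. The unique $v$--$w$ path $P$ in $T$ consists of $v$ followed by a descending chain in the tree order ending at $w$. Since $v \in V(D)$ and $w \in V(H)$ lie on opposite sides of $H$, the path $P$ must traverse an edge with exactly one endpoint in $D$. Here I rely on the observation that any such edge must lead from $D$ into $N(D) \subseteq V(H)$, for an edge from $D$ to a different component of $G - H$ would contradict that $D$ is a component. Letting $v^{*}$ be the first vertex of $P$ outside $D$ therefore places $v^{*}$ in $N(D)$ with $v <_T v^{*}$, as required.

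I do not anticipate a serious obstacle. The one ingredient worth underlining is the finiteness of the set of ancestors in $T$ of any fixed vertex, which is not a property of abstract order trees (where branches may be infinite well-orders), but holds here because $T$ is a graph-theoretic tree, and paths in graphs are finite by definition.
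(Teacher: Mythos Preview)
Your proof is correct. Both your argument and the paper's hinge on the same observation: by cofinality of $V(H)$ in $T$, every $v \in D \cap T$ lies strictly below some vertex of $N(D)$ in the tree order. The paper packages this as a recursive construction of infinitely many pairwise disjoint $D$--$H$ paths in $T$ (at each step picking $d_{n+1} \in D \cap T$ outside the finite down-closure of the earlier paths, then going up to $V(H)$), which contradicts $|N(D)| < \infty$. Your counting argument via the map $v \mapsto v^{*}$ is more direct: it avoids the recursion by noting that each $v^{*} \in N(D)$ has only finitely many $T$-ancestors, so the map has finite fibres over a finite codomain. The two arguments are equivalent in spirit; yours extracts the finiteness without the detour through disjoint paths.
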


\begin{proof}
Suppose for a contradiction that some component $D$ of $G - H$ meets $T$ infinitely. 
We recursively construct disjoint $d_n-h_n$ paths $P_n$ in $\upcl{d_n}_T$ from a vertex $d_n \in D \cap T$ to a vertex $h_n \in V(H)$. Suppose that paths $P_1,\ldots,P_n \subset T$ have already been constructed. As $X = \downcl{\bigcup_{m \leq n} V(P_m)}_T$ is finite, there exists $d_{n+1} \in \p{D \cap T} \setminus X$. Since $V(H)$ is cofinal in $T$, there is a vertex $h_{n+1} \in V(H)$ above $d_{n+1}$. Let $P_{n+1}$ be the unique path in $T$ from $d_{n+1}$ to $h_{n+1}$. Since $X$ is down-closed, we have $ \upcl{d_{n+1}} \cap X = \emptyset$. Since $P_{n+1} \subset \upcl{d_{n+1}}$, it follows that $P_{n+1}$ is disjoint from all earlier paths.

However, the existence of infinitely many pairwise disjoint paths from $D$ to $H$ in $G$ contradicts that $H$ has finite adhesion in $G$.
\end{proof}

\begin{proof}[Proof of Claim~\ref{clm_containingcofinally2}]
Since $V(G_i) \subset V(T_i)$, any component $D$ of $G-{T_i}$ is contained in a unique component $\tilde{D}$ of $G - G_i$. 

Now the neighbours in $N(D) \subset T_i$ come in two types. First, those in $N(D) \cap G_i$, but these must then also belong to $N(\tilde{D})$, and there are only finitely many of these, as $G_i$ has finite adhesion in $G$. And second, neighbours $N(D) \setminus G_i$, but they must then belong to $T_i \cap \tilde{D}$, and there are only finitely many of those by Lemma~\ref{lem_containingcofinally}.
\end{proof}

\subsection{Proof of Claim~\ref{clm_stronginduction}} By our main induction, all $G_i$ have normal spanning trees. However, we need to wrest the stronger assertion from our induction assumption that each $V(G_i)$ is contained in a normal (not necessarily spanning) tree of $G$, which by Jung's Theorem~\ref{thm_Jung} is precisely the assertion of Claim~\ref{clm_stronginduction}. For the proof, we need two definitions and lemma.

\begin{defn}[Dominated torsos] 
 \label{sec_weaktorso}
For an adhesion set $A$ in $G_i$, let $\script{D}_A$ be the set of components of $G-G_i$ with $N(D) = A$. The \emph{dominated torso} $\hat{G}_i$ of $G_i$ is the minor $\hat{G}_i \preceq G$, where
\begin{enumerate}[label=(T\arabic*)]
	\item\label{wt1} for adhesion sets $A$ with $\script{D}_A$ finite, we contract each $D \in \script{D}_A$ to a single vertex $v_D$, and
	\item\label{wt2} for adhesion sets $A$ with $\script{D}_A$ infinite, we choose a partition $\script{D}_A = \bigsqcup_{a \in A} \script{D}_a$ and contract 
	each connected graph $G[\Set{a} \cup \bigcup \script{D}_a]$ to a single vertex, identified with $a$.
\end{enumerate} 
\end{defn}

With these identifications we naturally have $G_i \subset \hat{G}_i$. Every adhesion set $A \subset G_i$ of the second type induces a clique in $\hat{G}_i$, and every adhesion set of the first type has a dominating vertex $v_D$.

\begin{defn}[Canonical projection] Let $P \subset G$ be a path with end vertices in $G_i$ or a ray which meets $G_i$ again and again. A \emph{$G_i$-fragment} $Q$ of $P$ is a connected component $\mathring{Q}$ of $P - G_i$ together with its two edges from $\mathring{Q}$ to $G_i$ and their ends (so every $P$-fragment is a $G_i$-path). The \emph{canonical projection of $P$ to $\hat{G_i}$} is the tour $\hat{P}\subset \hat{G}_i$ obtained by replacing every $G_i$-fragment $Q=xPy$ of $P$ by 
\begin{itemize}
\item the path $x v_D y$, if $\mathring{Q}$ is contained in a component $D$ of $G - G_i$ as in \ref{wt1}, or by 
\item the edge $xy$, if $\mathring{Q}$ is contained in a component $D$ of $G - G_i$ as in \ref{wt2}.
\end{itemize}
\end{defn}

\begin{lemma}
\label{lem_projectionproperties}
Let $P \subset G$ be a path with end vertices in $G_i$ or a ray meeting $G_i$ again and again.
\begin{enumerate}
\item	The canonical projection $\hat{P}$ of $P$ to $\hat{G}_i$ satisfies $V(P) \cap G_i = V(\hat{P}) \cap G_i$.
\item The canonical projection $\hat{P}$ of $P$ to $\hat{G}_i$ is a locally finite tour.
\item Let  $X \subset V(G_i)$ be a finite set of vertices, and $v,w \in V(G_i) \setminus X$. If $X$ separates $v$ from $w$ in $\hat{G_i}$, then it also separates $v$ from $w$ in $G$.
\end{enumerate}
\end{lemma}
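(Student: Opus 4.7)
My approach is to analyse each of the three assertions directly from the definitions of $\hat{G}_i$ and the canonical projection. Part~(1) is essentially immediate from the replacement rules: a $G_i$-fragment $Q = xPy$ has $V(Q) \cap V(G_i) = \{x,y\}$, with its interior $\mathring{Q}$ lying outside $V(G_i)$. Each permissible replacement---the path $xv_Dy$ or the edge $xy$---keeps $\{x,y\}$ and adds no further $G_i$-vertex (since $v_D \notin V(G_i)$). Concatenating the replacements with the unaltered $G_i$-portions of $P$ therefore leaves the set of $G_i$-vertices of $P$ unchanged, so $V(P) \cap V(G_i) = V(\hat{P}) \cap V(G_i)$.

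For Part~(2), concatenating the replacements immediately yields a well-defined walk $\hat{P}$ in $\hat{G}_i$. For local finiteness, the crucial input is the finite-adhesion hypothesis on $G_i$: every component $D$ of $G - G_i$ has $|N(D)| < \infty$. Since $P$ is a path or a ray, it uses each vertex at most once, so each $x \in N(D)$ is incident in $P$ with at most two edges; at most two of these can lead into a fragment of $P$ through $D$, so $x$ is an endpoint of at most two such fragments. Summing over $x \in N(D) \cap V(P)$, and remembering that each fragment has two endpoints in $V(G_i)$, the total number of fragments of $P$ through $D$ is at most $|N(D)|$, which bounds the number of visits of $\hat{P}$ to $v_D$. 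Vertices in $V(G_i)$ are visited by $\hat{P}$ at most once, exactly as by $P$. Hence $\hat{P}$ is locally finite.

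For Part~(3), I would argue by contraposition. Suppose $X \subset V(G_i)$ is finite, $v,w \in V(G_i) \setminus X$, and $X$ fails to separate $v$ from $w$ in $G$. Choose a $v$-$w$ path $P$ in $G - X$; since its ends lie in $V(G_i)$, the canonical projection $\hat{P}$ to $\hat{G}_i$ is defined. By Part~(1), $V(\hat{P}) \cap V(G_i) = V(P) \cap V(G_i)$ avoids $X$, and the remaining vertices of $\hat{P}$ are dominating vertices $v_D \notin V(G_i)$, hence also outside $X$. Thus $\hat{P}$ is a walk from $v$ to $w$ contained in $\hat{G}_i - X$, and from any walk one extracts a path with the same endpoints; so $X$ does not separate $v$ from $w$ in $\hat{G}_i$ either.

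The step I expect to be the main obstacle is the ``no-edge-repetition'' part of the tour condition in~(2). If two distinct fragments of $P$ pass through the same component $D$ of type~\ref{wt1} and happen to share an endpoint $x \in N(D)$, then both replacement paths traverse the edge $xv_D$, formally re-using it in the simple graph $\hat{G}_i$. My plan is to interpret $\hat{P}$ in the natural multigraph carried by the contraction---tracking, for each traversal of $xv_D$, the underlying edge of $G$ it comes from---so that distinct fragments contribute formally distinct edges and $\hat{P}$ becomes a tour in the strict sense. This refinement is harmless for the downstream use of (2) in Part~(3), which only needs to extract a path from a walk lying inside a prescribed vertex set.
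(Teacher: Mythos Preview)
Your argument is correct and, for parts~(1) and~(3), essentially identical to the paper's. For~(2) the paper is terser: it observes that the only vertices of $\hat{P}$ that can be repeated are the $v_D$ of type~\ref{wt1}, and that each such $v_D$ has finite degree in $\hat{G}_i$ (namely $|N(D)|$, by finite adhesion), so a tour visits it only finitely often. Your fragment-counting argument arrives at the same bound without relying on the no-edge-repetition property of tours; this is arguably more robust, since the edge-repetition issue you flag (two fragments through the same $D$ sharing an endpoint $x$, both projecting to the edge $xv_D$) is genuine and the paper does not address it. Your multigraph interpretation fixes it cleanly, and as you note this subtlety is immaterial for~(3) and for the downstream use in the proof of Claim~\ref{clm_stronginduction}.
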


\begin{proof}
Assertion (1) is immediate. Assertion (2) holds as the only vertices in $\hat{P}$ used more than once are of the form $v_D$ as in \ref{wt1}, but these have finite degree in $\hat{G}_i$. Assertion (3) follows, since if $P$ is a $v-w$ path in $G$ avoiding $X$, then $\hat{P}$ is a $v-w$ tour by (2) avoiding $X$ by (1).
\end{proof}

\begin{proof}[Proof of Claim~\ref{clm_stronginduction}]
Since $G_i$ has finite adhesion in $G$, 
the number of vertices $v_D$ in $\hat{G}_i$ as in \ref{wt1} is at most $|G_i|$. Therefore, we have $|\hat{G}_i| = |G_i| < \kappa$, and so inductively, every dominated torso $\hat{G}_i$ has a normal spanning tree. By Jung's Theorem~\ref{thm_Jung}, the set of vertices $V(G_{i})$ in the graph $\hat{G}_{i}$ is a countable union of sets $\set{U_n}:{n \in \N}$ which are dispersed in $\hat{G}_{i}$.

It remains to show that each $U_n$ is also dispersed in $G$. Consider an arbitrary ray $R$ in $G$. If $R$ is eventually contained in a component $D$ of $G - G_{i}$, then $N(D)$ separates $U_n$ from a tail of $R$. Otherwise, $R$ meets $G_{i}$ again and again. Let $\hat{R}$ be the canonical projection of $R$ to $\hat{G}_i$, which is an infinite, locally finite tour by Lemma~\ref{lem_projectionproperties}(1)\&(2). Since $U_n$ is dispersed in $\hat{G}_{i}$, there is a finite set of vertices $\hat{X} \subset V(\hat{G}_{i})$ separating $\hat{R}$ from $U_n$ in $\hat{G}_{i}$. Let $X$ denote the finite subset of $V(G_i)$ where we replace every vertex in $\hat{X}$ of the form $v_{D}$ as in \ref{wt1} by all vertices in $N(D)$. Then $X \subset V(G_{i})$ separates $\hat{R}$ from $U_n$ in $\hat{G}_{i}$. By Lemma~\ref{lem_projectionproperties}(3), the set $X$ then also separates $R$ from $U_n$ in $G$. Hence, $U_n$ is dispersed in $G$ as desired.
\end{proof}

\section{A forbidden minor characterisation for normal spanning trees}
\label{sec_forb}

The main result of \cite{bowler2015colouring} is a forbidden subgraph characterisation for the property of ``having colouring number $\leq \mu$''. The forbidden subgraphs for the case $\mu = \aleph_0$ are the following:

\begin{defn}
\label{def_aleph0obstruction}
(1) A $(\lambda,\lambda^+)$\emph{-graph} for some infinite cardinal $\lambda$ is a bipartite graph $(A,B)$ such that $\cardinality{A}=\lambda$, $\cardinality{B}=\lambda^+$, and every vertex in $B$ has infinite degree. 
	
(2) A $(\kappa,S)$\emph{-graph} for some regular uncountable cardinal $\kappa$ and some stationary set $S \subset \kappa$ of cofinality $\omega$ ordinals 
is a graph with vertex set $V(G) = \kappa$ such that $N(s) \cap \set{v \in \kappa}:{v < s}$ is countable with supremum $s$ for all $s \in S$.
\end{defn}

\maintwo*

\begin{proof}
It is proved in \cite{bowler2015colouring} that a graph $G$ has countable colouring number if and only if $G$ does not contain a $(\lambda,\lambda^+)$-graph nor a  $(\kappa,S)$-graph as a subgraph. Hence, this forbidden subgraph characterisation for countable colouring number translates, via Theorem~\ref{thm_Halin's_conj}, to a forbidden minor characterisation for normal spanning trees. 

It remains to argue that for the backwards implication it suffices to exclude these obstructions as minors with countable branch sets. Towards this end, recall that if a graph contains an uncountable clique minor, then it also contains such a minor with countable branch sets: Indeed, if there is an uncountable clique minor, then one also gets a subdivision of an uncountable clique by a result of Jung \cite{jung1967zusammenzuge}, say on branch vertices $\set{v_i}:{i < \omega_1}$. Contracting each $v_i$ together with all subdivided paths leading from $v_i$ to earlier vertices $v_j$ for $j < i$ yields the desired minor with countable branch sets. Thus, after excluding uncountable clique minors with countable branch sets, Lemma~\ref{lem_noOmega_1chains} still implies that $\GT \preceq G$ has all branch sets countable. In Lemma~\ref{lem_closureTrobust}, two further minors of $G$ and $\GT$ respectively are considered: 
\begin{itemize}
	\item A barricade minor of $G$; in the  proof, a number of components $D$ where contracted. However, for each component is suffices to contract a countable subtree $T_D \subset D$ so that the resulting vertex has infinite degree in the barricade.
	\item An Aronszajn tree minor of $\GT$, obtained by contracting the countable subset $T_0$ of $\GT$.
	\end{itemize}

Next, in the proof of Lemma~\ref{lem_decomposition} for regular $\kappa$, we chose a well-order of $V(\GT)$ witnessing that $\GT$ has countable colouring number. By \cite{bowler2015colouring}, this  requires that there are no $(\lambda,\lambda^+)$-graphs or $(\mu,S)$-graphs contained in $\GT$ as subgraphs, which is again fine as $\GT \preceq G$ has all branch sets countable.

Finally, we consider minors $\hat{G}_i \preceq G$ 
in Section~\ref{sec_mainproof}. Since $G_i \subset G$ has finite adhesion, these minors can be realised using finite branch sets; so any forbidden minor occurring with countable branch sets in $\hat{G}_i$ also occurs as minor with countable branch sets in $G$.
\end{proof}

\section{Diestel's normal spanning tree criterion}
\label{sec_fat}

The original proof of Diestel's sufficient condition from \cite{diestel2016simple}, that graphs without fat $TK_{\aleph_0}$ have normal spanning trees, relied on the incorrect forbidden minor characterisation from \cite{DiestelLeaderNST}. This section contains a proof of Diestel's criterion based on the obstructions from Theorem~\ref{thm_forbiddenminorsIntro}.

A graph $G$ is an $IX$ (an \emph{inflated} $X$) if $X \preceq G$ such that $V(G)=\bigcup V_x$. We say an $IX$ graph $G$ is \emph{countably inflated} if $V_x \subset V(G)$ is countable for all $x \in X$.

\begin{theorem}
All graphs not containing a fat $TK_{\aleph_0}$ as subgraph have normal spanning trees.
\end{theorem}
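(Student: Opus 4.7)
My plan is to prove the contrapositive: if $G$ has no normal spanning tree, then $G$ contains a fat $TK_{\aleph_0}$ as subgraph. By Theorem~\ref{thm_forbiddenminorsIntro}, under this assumption $G$ contains either a $(\lambda,\lambda^+)$-graph or a $(\kappa,S)$-graph $H$ as a minor with countable branch sets $\set{V_x}:{x \in V(H)}$, so it suffices to construct a fat $TK_{\aleph_0}$ subgraph of $G$ in each of these two cases.

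The template is the same in both. I select $\aleph_0$ vertices $x_1, x_2, \ldots \in V(H)$, a single ``hub'' vertex $u_i \in V_{x_i}$ for each, and for every pair $i<j$ an uncountable set $R_{ij} \subseteq V(H)$ of further vertices, pairwise disjoint across pairs, with the property that for every $y \in R_{ij}$ the branch set $V_y$ contains both a $G$-neighbour of $u_i$ and a $G$-neighbour of $u_j$. Connectivity of each such $V_y$ then yields an internally $V_y$-confined $u_i$–$u_j$ path in $G$; vertex-disjointness of different $V_y$'s keeps these paths internally disjoint within a pair, while pairwise disjointness of the $R_{ij}$'s propagates this across pairs. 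The union is then a fat $TK_{\aleph_0}$ subgraph of $G$ with branch vertices $\set{u_i}:{i \in \N}$.

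For the $(\lambda,\lambda^+)$-case I would construct the $x_n$, $u_n$ and reservoirs recursively. At stage $n$, Lemma~2.4 of~\cite{bowler2015colouring} supplies some $x_n \in A$ of $\lambda^+$-degree into the current uncountable $B^{(n-1)} \subseteq B$, and then a pigeonhole on the countable $V_{x_n}$ locates a hub $u_n$ carrying $\lambda^+$-many edges out of $V_{x_n}$ to branch sets indexed by $B^{(n-1)}$, giving a successor $B^{(n)} \subseteq B^{(n-1)}$ of size $\lambda^+$ whose members $b$ all have $u_n$ incident to $V_b$. Thus every $b \in B^{(j)}$ provides a path through $V_b$ from $u_i$ to $u_j$ for all $i \leq j$, and the reservoirs $R_{ij}\subseteq B^{(j)}$ are carved out pair-by-pair in a greedy enumeration. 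The $(\kappa,S)$-case runs along the same lines, with a Fodor-type pressing-down argument on $S$ supplying at each stage a stationary collection of ``hub-compatible'' vertices playing the role of the bipartite reservoir $B^{(n-1)}$.

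The main obstacle I foresee is keeping $|B^{(j)}|$ large enough after countably many recursive stages to support $\aleph_0$-many disjoint uncountable reservoirs --- a genuine concern precisely when $\lambda^+ = \aleph_1$. In that subcase, one must arrange not only $|B^{(n)}|=\lambda^+$ but also $|B^{(n-1)}\setminus B^{(n)}| = \lambda^+$ at each step; should the latter fail, i.e.\ a single hub exhausts almost all of $B^{(n-1)}$, the structure already concentrates enough edges to yield the fat $TK_{\aleph_0}$ directly via a shortcut. A symmetric care is required for the $(\kappa,S)$-case when translating Fodor's lemma into nested stationary sets of hub-compatible vertices. Once hubs and reservoirs are in place, the verification that the resulting subdivided complete graph on $\set{u_i}:{i \in \N}$ is fat $TK_{\aleph_0}$ is immediate from the disjointness of branch sets in $H$.
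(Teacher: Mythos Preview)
Your approach is essentially the paper's: reduce via Theorem~\ref{thm_forbiddenminorsIntro} to countably inflated $(\lambda,\lambda^+)$- and $(\kappa,S)$-graphs, then build hubs $u_n$ by pigeonhole on the countable branch sets (Fodor in the second case) so that each $u_n$ sees all branch sets indexed by a large tail $B^{(n)}$ (resp.\ stationary $S_n$), and finally thread $\aleph_1$ many internally disjoint paths through those branch sets.

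Two comments. First, your ``main obstacle'' is a phantom. You do \emph{not} need $|B^{(n-1)}\setminus B^{(n)}|=\lambda^+$, nor any shortcut. The greedy carving works outright: enumerate the $\aleph_1$ many needed (pair, copy) slots in order type $\omega_1$; at step $\alpha$ you seek a fresh $b\in B^{(j)}$ for the current pair $(i,j)$, and since $|B^{(j)}|=\lambda^+\geq\aleph_1>|\alpha|$ one exists. No intersection $\bigcap_n B^{(n)}$ is ever required, and no care about the differences $B^{(n-1)}\setminus B^{(n)}$ is needed. The same remark applies verbatim to the stationary reservoirs $S_n$: at each step of the $\omega_1$-enumeration you have used only countably many $s$'s, and a stationary set minus a countable set is still stationary (indeed of size $\kappa$).

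Second, your appeal to \cite[Lemma~2.4]{bowler2015colouring} at stage $n$ is not quite on target: that lemma gives every $a\in A$ degree $\lambda^+$ into the \emph{original} $B$, not into the shrunk $B^{(n-1)}$. You need the short counting argument the paper supplies: if every $a\in A\setminus\{x_1,\ldots,x_{n-1}\}$ had fewer than $\lambda^+$ neighbours in $B^{(n-1)}$, then their union of neighbourhoods would miss some $b\in B^{(n-1)}$, forcing $b$ to have all its (infinitely many) neighbours among $x_1,\ldots,x_{n-1}$, a contradiction.
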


\begin{proof}
By Theorem~\ref{thm_forbiddenminorsIntro} it suffices to show: \emph{Every countably inflated version $IX$ of a $(\lambda,\lambda^+)$-graph or a  $(\kappa,S)$-graph $X$ contains a fat $TK^{\aleph_0}$.}

We first deal with $(\lambda,\lambda^+)$-graphs. Let $(A,B)$ be the bipartition for $X$ with $|A|=\lambda$ and $|B| = \lambda^+$, and suppose that $H$ is a countably inflated $IX$. Construct, inductively, an infinite set $\Set{a_1,a_2,a_3, \ldots} \subset A$ and a nested sequence $B_1 \supset B_2 \supset \cdots$ of $\lambda^+$ sized subsets of $B$ such that for each $a_n$ there is a vertex $h_n \in V_{a_n}$ which sends edges in $H$ to all branch sets of $b \in B_n$. Suppose the construction has proceeded to step $n$. Fix a new vertex $a_{n+1}$ whose $X$-neighbourhood $B'_{n+1}$ in $B_n$ has size $\lambda^+$. To see that this is possible, note that if there is no vertex $a \in A \setminus \Set{a_1,a_2,\ldots,a_n}$ as claimed, then each vertex $a \in A \setminus \Set{a_1,a_2,\ldots,a_n}$ has as most $\lambda$ many neighbours in $B_n$. As $A \setminus \Set{a_1,a_2,\ldots,a_n}$ has size $\lambda$, this means that $B_{n} \setminus N(A \setminus \Set{a_1,a_2,\ldots,a_n}) \neq \emptyset$. But every vertex in this set has all its neighbours in $\Set{a_1,a_2,\ldots,a_n}$, and thus has finite degree, a contradiction. 
Since $V_{a_{n+1}}$ is countable and $\lambda^+$ is regular uncountable, there is a vertex $h_{n+1} \in  V_{a_{n+1}}$ which receives edges from $\lambda^+$ branch sets of distinct vertices in $B'_{n+1}$, and we call this set of vertices $B_{n+1}$. This completes the inductive construction. 

Since we may enumerate the edges needed for a fat $TK^{\aleph_0}$ in order type $\omega_1$ and $\lambda^+ \geq \omega_1$,  it is then routine to construct a fat $TK^{\aleph_0}$ with branch vertices $h_1,h_2,h_3,\ldots$ in $H$.

Next, we deal with $(\kappa,S)$-graphs $X$. Suppose that $H$ is a countably inflated $IX$. Let us enumerate the down-neighbours of $s \in S$ in $X$ by $v^s_1,v^s_2,\ldots$. We construct, inductively, an infinite set $\Set{v_1,v_2,v_3, \ldots} \subset X$ and a nested sequence $S_1 \supset S_2 \supset \cdots$ of stationary subsets of $S$ such that for each $v_n$ there is a vertex $h_n \in V_{v_n}$ which sends edges to all branch sets of $s \in S_n$, and each $s \in S_n$ satisfies $v^s_i=v_i$ for all $i \leq n$. By applying Fodor's Lemma~\ref{lem_stationary}(2), find a vertex $v_{n+1}$ and a stationary subset $S'_{n+1} \subset S_n$ such that $v^s_i = v_i$ for all $i \leq n+1$ and all $s \in S'_{n+1}$. Since $V_{v_{n+1}}$ is countable and $S'_{n+1}$ is stationary, by Lemma~\ref{lem_stationary}(1) there is a vertex $h_{n+1} \in  V_{a_{n+1}}$ which receives edges from stationary many distinct branch sets of vertices in $S'_{n+1}$, and we call this set of vertices $S_{n+1}$. This completes the inductive construction. 

Once again, it is then routine to construct a fat $TK^{\aleph_0}$ with branch vertices some infinite subset of $\Set{h_1,h_2,h_3,\ldots}$ in $H$.
\end{proof}

\section{Further problems on normal spanning trees and forbidden minors}

\begin{prob}
Is there a list of forbidden minors for the property of having a normal spanning tree consisting of all $(\lambda,\lambda^+)$-graphs, and a list of $T$-graphs? 
\end{prob}

\begin{prob}
\label{prob2}
Is it consistent with the axioms of set theory ZFC that it suffices in Theorem~\ref{thm_forbiddenminorsIntro} to forbid minors of cardinality $\aleph_1$?
\end{prob}

An earlier result of the author \cite[Theorem~5.1]{pitz2020new} shows that the opposite assertion is consistent with ZFC as well, that one needs to forbid minors of arbitrarily large cardinality in Theorem~\ref{thm_forbiddenminorsIntro}.

\begin{prob}
\label{prob3}
Is it true that a graph has a normal spanning tree if and only if its vertex set is the countable union of fat $TK^{\aleph_0}$-dispersed sets? Here, a set  of vertices $U$ is \emph{fat $TK^{\aleph_0}$-dispersed} if the branch vertices of every fat $TK^{\aleph_0}$ can be separated from $U$ by a finite set of vertices.
\end{prob}

An affirmative result to  the last problem would both generalise Jung's Theorem~\ref{thm_Jung} as well as a result by the author \cite{pitz2020}.

\bigskip
\textbf{Addendum to the ArXiv version:} An affirmative answer to Problem~\ref{prob2} follows from Theorem~\ref{thm_Halin's_conj} and a result of Komj\'ath \cite{komjath1987colouring} that it is consistent that every graph of uncountable colouring number number contains a subgraph of size and colouring number $\aleph_1$. An affirmative answer to Problem~\ref{prob3} has been given by the author in \cite{pitz2021quickly}.

\bibliographystyle{plain}
\bibliography{Pitz_ProofHalinsConjecture}

\end{document}